\numberwithin{equation}{section}
\numberwithin{figure}{section}
  \theoremstyle{plain}
  \newtheorem*{thm*}{\protect\theoremname}
\theoremstyle{plain}
\newtheorem{thm}{\protect\theoremname}[section]
  \theoremstyle{remark}
  \newtheorem{rem}[thm]{\protect\remarkname}
  \theoremstyle{definition}
  \newtheorem{defn}[thm]{\protect\definitionname}
  \theoremstyle{plain}
  \newtheorem{fact}[thm]{\protect\factname}
  \theoremstyle{plain}
  \newtheorem{cor}[thm]{\protect\corollaryname}
  \theoremstyle{plain}
  \newtheorem{lem}[thm]{\protect\lemmaname}
  \theoremstyle{remark}
  \newtheorem*{rem*}{\protect\remarkname}
  \theoremstyle{plain}
  \newtheorem{prop}[thm]{\protect\propositionname}
\def\Ind#1#2{#1\setbox0=\hbox{$#1x$}\kern\wd0\hbox to 0pt{\hss$#1\mid$\hss}
\lower.9\ht0\hbox to 0pt{\hss$#1\smile$\hss}\kern\wd0}
\def\Notind#1#2{#1\setbox0=\hbox{$#1x$}\kern\wd0\hbox to 0pt{\mathchardef
\nn="3236\hss$#1\nn$\kern1.4\wd0\hss}\hbox to
0pt{\hss$#1\mid$\hss}\lower.9\ht0
\hbox to 0pt{\hss$#1\smile$\hss}\kern\wd0}
  \providecommand{\corollaryname}{Corollary}
  \providecommand{\definitionname}{Definition}
  \providecommand{\factname}{Fact}
  \providecommand{\lemmaname}{Lemma}
  \providecommand{\propositionname}{Proposition}
  \providecommand{\remarkname}{Remark}
  \providecommand{\theoremname}{Theorem}
\providecommand{\theoremname}{Theorem}
\begin{document}
\global\long\def\Rr{\mathbb{R}}
\global\long\def\dim#1{\mathrm{d}\left(#1\right)}
\global\long\def\cl#1{\mathrm{cl}\left(#1\right)}
\global\long\def\gcl#1{\mathrm{gcl}\left(#1\right)}
\global\long\def\Aut{\mathrm{Aut}}
\global\long\def\Autf{\mathrm{Autf}}
\global\long\def\acleq{\mbox{\ensuremath{\mathrm{acl}}\ensuremath{\ensuremath{^{eq}}}}}
\global\long\def\bdd{\mathrm{Bdd}}

\title{The small index property for homogeneous models in AEC's}
\authors{
\author{Zaniar Ghadernezhad}
\address{ School of Mathematics\\ Institute for Research in Fundamental Sciences (IPM)\\ P.O. Box 19395-5746\\ Tehran, Iran.}
\email{zaniar.gh@gmail.com}
\author{Andr\'es Villaveces}
\address{ Departamento de Matem\'aticas, Universidad Nacional de Colombia, Bogot\'a 111321, Colombia}
\email{avillavecesn@unal.edu.co}}
\thanks{This research was made possible partially by Colciencias grant M\'etodos de Estabilidad en Clases No Estables. The first author's research was also partially supported by the Iran National Science Foundation (INSF)}
\maketitle

\tableofcontents{}

\section{Introduction }

The role of automorphism groups of mathematical structures and their
connections to their theories and \emph{bi-interpretation} classes
has been an active research area in model theory in the past two decades.
The main theme has been to investigate what kind of information from
the structure can be recovered from its group of automorphisms (the
\emph{reconstruction problem}). The automorphism groups are also topological
groups, in a natural way (pointwise convergence topology). The reconstruction
problem is therefore linked to both the purely algebraic aspects of
the groups $Aut(M)$, as well as to their topological aspects. The
crucial property linking these two aspects is the ``small index property'',
the center of study of this paper. A structure $M$ of cardinality
$\kappa$ has the \emph{small index property} (SIP) if every subgroup
$H\leq Aut(M)$ of small index (that is, such that $\left[Aut\left(M\right):H\right]<\left|Aut\left(M\right)\right|$)
is open in $Aut(M)$.

Although in principle the SIP is posed in terms of a (first-order)
structure $M$, previous results have relied heavily on the \emph{first
order theory }of $M$ and its bi-interpretability class. The structural
properties revealed by $Aut(M)$ depend somehow more on the theory
of $M$, on the structural properties \emph{around} $M$, than on
$M$ taken in isolation. In this article, we look for the first time
at the situation of a structure $M$ that may fail to be saturated
yet still have a good reconstruction problem, provided by other structural
properties: 
\begin{itemize}
\item homogeneity,
\item being inside an abstract elementary class (AEC) and
\item having a strong amalgamation property (for strong embeddings in the
abstract elementary class, \emph{and} for automorphisms).
\end{itemize}
These properties are isolated by carefully checking the case studied
before by Lascar and Shelah \cite{MR1204064}, where they prove the
SIP for uncountable, saturated structures, relying on their first
order theories. In this paper we show that their techniques may be
adapted to our more general setup: saturation is weakened to homogeneity,
and we isolate a strong notion of amalgamation in AECs that suffices
to replace the technical aspects of the proof from \cite{MR1204064}
that depend on the first order theory of the structure. So, our result
shows that the SIP may be obtained for homogeneous structures, provided
they are placed in an AEC that has enough structural properties\footnote{Classical model theory deals with mathematical structures using tools
of first-order logic; the first-order setting provides many technical
tools to study them. However, the first-order setting has some limitations:
many classes of mathematical structures are not first-order axiomatizable,
and even when they are, there are situations in which a ``non-elementary''
model theoretical analysis results in better regularity properties.
Shelah in \cite{MR1033034} introduced the notion of \emph{Abstract
Elementary Classes} (AECs), where logic and syntax are set aside and
the elements of the class are axiomatized using an abstract notion
of ``strong'' embedding (important examples of these classes include
Zilber's pseudo-exponentiation, various pseudo-analytic expansions
of the complex numbers and several instances of covers - more recently,
modular functions have been successfully analyzed using these tools
- see \cite{Harris}). In this general setting, we do not necessarily
need to consider the elements of the class in the framework of the
first-order logic and work with formulas. However, many important
concepts of model theory such as types, forking and other independence
notions have been successfully studied in the AEC context.}.

An important step in reconstruction problems is therefore determining
whether a structure has the SIP: this property is a key piece in the
recovery of the topological structure of the automorphism group from
its pure group structure. The SIP has been proved for a number of
countable first-order structures. With the pointwise convergence topology,
in the first-order case, the automorphism groups of countable structures
are actually \emph{Polish} groups (see \cite{MR1066691} for more
details). Indeed, the automorphism groups of countable structures
being Polish is a very useful fact that can provide many technical
tools to prove properties like SIP. For the uncountable first-order
structures, we do not have access to such tools, however still nice
properties can be proven: In the uncountable case, with some cardinal
restrictions, Lascar and Shelah in \cite{MR1204064} proved that the
automorphism group of an uncountable saturated model has SIP.

In this paper, we prove the following theorem (Theorem \ref{thm:main}
in this paper, see section \ref{sec:main-thm}):
\begin{thm*}
Let $M$ be a homogeneous model in an abstract elementary class $\left(K,\preccurlyeq_{\mathcal{K}}\right)$
such that \textup{$\left|M\right|=\kappa>\mbox{LS}\left(K\right)$
}\textup{\emph{and}}\textup{ $\kappa^{<\kappa}=\kappa$. }\textup{\emph{Furthermore,
assume that }}\emph{$\mathcal{K}^{<}\left(M\right)$ }is a\emph{ }strong
amalgamation clas\textup{\emph{s. }}Consider the group\textup{\emph{
$Aut(M)$}}\textup{ }with the topology given by $\mathfrak{T}^{\mbox{cl}}$,
and let $H\leqslant\Aut\left(M\right)$ be such that $\left[\Aut\left(M\right):H\right]\leq\kappa$.
\uline{Then}, $H$ is an open subgroup of $Aut(M)$; i.e., there
exists $A\in\mathcal{K}^{<}\left(M\right)$ such that $\Aut_{A}\left(M\right)\leqslant H$.

In other words, $\left(\Aut\left(M\right),\mathfrak{T}^{\mbox{cl}}\right)$
has the small index property. 
\end{thm*}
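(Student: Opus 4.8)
The plan is to mirror the overall architecture of the Lascar--Shelah argument for saturated models, but to replace every appeal to the first-order theory (strong types, the forking calculus, stationarity) by an appeal to the strong amalgamation property of $\mathcal{K}^{<}(M)$ together with the homogeneity of $M$. Since a basic neighbourhood of the identity in $\mathfrak{T}^{\mathrm{cl}}$ is a pointwise stabilizer $\Aut_{A}(M)$ with $A\in\mathcal{K}^{<}(M)$, the statement ``$H$ is open'' is literally equivalent to ``$H$ contains some $\Aut_{A}(M)$ with $A$ small.'' So the whole problem reduces to producing a single $A\in\mathcal{K}^{<}(M)$ with $\Aut_{A}(M)\leqslant H$, and I will aim directly at that.

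First I would fix a continuous increasing chain $\langle M_{i}:i<\kappa\rangle$ of members of $\mathcal{K}^{<}(M)$ with $\bigcup_{i<\kappa}M_{i}=M$; such a chain exists because $\kappa>\mathrm{LS}(K)$ and $\kappa^{<\kappa}=\kappa$ (so $\kappa$ is regular) control the sizes. The stabilizers $\Aut_{M_{i}}(M)$ form a decreasing chain with intersection $\{1\}$, and since $[K:K\cap H]\leq[\Aut(M):H]$ for any subgroup $K$, each restricted index $[\Aut_{M_{i}}(M):\Aut_{M_{i}}(M)\cap H]$ is at most $\kappa$. Using this bound together with $\kappa^{<\kappa}=\kappa$, I would run a pigeonhole / pressing-down argument along the chain to isolate a ``good'' level $A=M_{i^{*}}$: a level past which the finitely many cosets of $H$ relevant to analysing members of $\Aut_{A}(M)$ have stabilized. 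This is the step where the cardinal arithmetic hypothesis does its work.

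The heart of the proof is then a decomposition lemma: every $g\in\Aut_{A}(M)$ should be expressible as a product of a bounded number of automorphisms, each forced to lie in $H$. Here is where strong amalgamation enters. Given $g$ fixing $A$ pointwise, I would split $M$ over $A$ and use the strong (disjoint) amalgamation property of $\mathcal{K}^{<}(M)$, together with homogeneity, to write $g=g_{1}\cdots g_{n}$ with factors whose ``supports'' are strongly independent over $A$. The key device is then the following: each such factor can be realized as $f_{\beta}^{-1}f_{\alpha}$ inside a family $\langle f_{\alpha}:\alpha<\kappa^{+}\rangle$ of strongly-independent copies built by iterated strong amalgamation and turned into genuine members of $\Aut(M)$ via homogeneity; since there are only $\leq\kappa$ cosets of $H$, two members of this family share a coset, and the corresponding factor lands in $H$. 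Strong amalgamation is precisely the tool that, in the saturated first-order case, was supplied by nonforking independence and the realization of strong types, while homogeneity provides the extension of partial isomorphisms between small strong substructures to honest automorphisms that drives the back-and-forth constructing the $g_{i}$ and the copies $f_{\alpha}$.

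I expect this decomposition lemma to be the main obstacle. Producing the factors $g_{i}$ requires simultaneously that their supports be strongly independent over $A$ (so that the independent-copies counting goes through) and that they genuinely multiply back to $g$; keeping these two demands compatible is delicate, and it is exactly the point where the first-order proof leaned on stationarity and the independence theorem. Verifying that the strong amalgamation hypothesis leaves enough ``room'' to carry out the $\kappa^{+}$-iterated amalgams, and that the resulting maps extend to elements of $\Aut(M)$ rather than to mere strong self-embeddings, will be the crux. Once the decomposition lemma is combined with the good level $A$ from the counting step, $\Aut_{A}(M)\leqslant H$ follows at once, and hence $H$ is open in $\left(\Aut(M),\mathfrak{T}^{\mathrm{cl}}\right)$.
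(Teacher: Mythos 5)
Your proposal does not follow the paper's argument, and the route you choose has genuine gaps at exactly the places you yourself flag as ``the crux.'' The paper's proof is by contradiction and never states or needs a decomposition lemma: assuming $H$ of index $\leq\kappa$ is \emph{not} open, it uses the strong amalgamation hypothesis (Definition \ref{Strong-Amalgamation-Base}) to manufacture \emph{generic sequences} of automorphisms (Lemma \ref{lem:ex-gen}, and Lemma \ref{lem:matrix} for correcting a matrix of automorphisms into a generic sequence), then Proposition \ref{prop:many-aut} -- which is where non-openness is used, to ensure no coset of $H$ contains a nonempty open set -- produces one generic sequence containing, over every $M_{0}\in\mathcal{K}^{<}\left(M\right)$ and every $h\in\Aut\left(M_{0}\right)$, $\kappa$ many extensions of $h$ lying outside $H$ as well as $\kappa$ many members of $H$. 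Finally a tree construction indexed by $2^{<\kappa}$ produces branch automorphisms $g_{\sigma}$, $\sigma\in2^{\kappa}$, satisfying conjugation identities $g_{\sigma}\circ h_{t}\circ g_{\sigma}^{-1}=k_{t}$ with $h_{s,0}\in H$, $h_{s,1}\notin H$, $k_{s,0}=k_{s,1}$; these force $g_{\sigma}^{-1}\circ g_{\tau}\notin H$ for $\sigma\neq\tau$, so $\left[\Aut\left(M\right):H\right]=2^{\kappa}$, a contradiction.

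Your route is instead the direct Dixon--Neumann--Thomas-style argument that succeeds for symmetric groups and some countable structures, and its two key steps are genuinely missing rather than routine. First, the ``good level'' step: you never specify which cosets are supposed to stabilize along the chain $\left(M_{i}\right)_{i<\kappa}$, and there is no reason a length-$\kappa$ chain of pointwise stabilizers should exhibit any such stabilization relative to a subgroup of index $\leq\kappa$ in a group which (in the nontrivial case handled by the paper) has cardinality $2^{\kappa}$. Second, and more seriously, the decomposition lemma: in a general AEC an automorphism has no meaningful notion of ``support,'' and the paper's strong amalgamation property is a statement about amalgamating \emph{sequences of automorphisms} over a common base, not disjoint amalgamation of models; it does not by itself furnish ``strongly independent copies'' with the multiplicative structure your pigeonhole device requires. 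The pigeonhole on $\kappa^{+}$ copies does show that certain elements of the form $f_{\beta}^{-1}\circ f_{\alpha}$ lie in $H$, but expressing an \emph{arbitrary} $g\in\Aut_{A}\left(M\right)$ as a bounded product of such elements is essentially the entire content of the theorem, and nothing in your sketch (nor in the paper's hypotheses, so far as its methods show) provides it. This is precisely the difficulty that Lascar and Shelah, and this paper after them, circumvent by replacing any decomposition with the generic-sequence/tree contradiction argument.
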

This theorem provides a purely algebraic framework for a model to
have the small index property; we thereby transfer in a rather sharp
way the Lascar-Shelah setting \cite{MR1204064}, where a similar result
is proved for saturated uncountable models of a first order
theory.

In the last two sections, we study some examples and applications:
we prove the SIP for the Zilber field (see \cite{MR2167585,LH,BHHKK}),
we study the SIP for the $j$-mapping (modular invariant) under the
light of recent results in its model theory, and for various other
$L_{\omega_{1},\omega}$-axiomatizable structures.

The authors thank the referee for several questions and remarks that
have helped improve the presentation of our results.

\section{Setting}

\label{sec:setting}

We fix two infinite cardinals $\lambda,\kappa$ such that $\lambda<\kappa$
and $\kappa^{<\kappa}=\kappa$. We work within a fixed AEC $\mathcal{K}=\left(K,\prec_{\mathcal{K}}\right)$
with $\mbox{LS}\left(\mathcal{K}\right)\leq\lambda$. We now provide
some notation and definitions.

\subsection{Notation}
\begin{itemize}
\item $\mbox{Iso}\left(N_{1};N_{2}\right)$ denotes the set of all isomorphisms
from $N_{1}$ onto $N_{2}$, for $N_{1},N_{2}\in K$.
\item $\mathcal{K}^{<}\left(M\right):=\left\{ N:N\ensuremath{\preccurlyeq_{\mathcal{K}}}M,\left|N\right|<\kappa\right\} $.
Clearly, this is not empty, as $LS(\mathcal{K})\leq\lambda<\kappa$.
\item For $N_{1},N_{2}\in\mathcal{K^{<}}\left(M\right)$ and $f\in\mbox{Iso}\left(N_{1};N_{2}\right)$
we define $\mathcal{O}_{f}^{M}:=\left\{ g\in\Aut\left(M\right):f\leqslant g\right\} $,
the set of all automorphisms of $M$ extending $f$. Note that $\mathcal{O}_{f}^{M}\neq\emptyset$
means that $f$ can be extended to an automorphism of $M$.
\item As usual, we denote by $g^{\alpha}$ the map $\alpha^{-1}\circ g\circ\alpha$.
\item Also, $Aut_{N}(M)$ denotes the pointwise stabilizer of $N$ (the
subgroup of automorphisms of $M$ that fix $N$ pointwise) and $Aut_{\left\{ N\right\} }(M)$
denotes the setwise stabilizer of $N$, the subgroup of automorphisms
of $M$ that fix $N$ setwise.
\end{itemize}
For the rest of the paper we fix a homogeneous model $M\in K$ of
size $\kappa$ (we use ``homogeneous'' in the following precise
sense: all isomorphisms between small strong substructures of $M$,
$f:N_{1}\to N_{2}$ for $N_{1},N_{2}\in\mathcal{K}^{<}(M)$, can be
extended to automorphisms of $M$). \emph{All our results} from here
on refer to properties of $M$ within the AEC $\mathcal{K}$.

In our context, ``small'' (subset or submodel) means ``of cardinality
$<\kappa$''.

We may even say that our focus of study is the part of $\mathcal{K}$
``below $M$'', i.e. the class of small $\mathcal{K}$-elementary
submodels of $M$. In this sense the actual arena of our results is
the class $\left(\mathcal{K}^{<}(M),\prec_{\mathcal{K}}\right)$.
This is almost an AEC; the only property of an AEC this fails to satisfy
is the unions axiom. In recent years, weak AECs have been studied;
this class is one of them.

Our notion of homogeneity means precisely that $\mathcal{O}_{f}^{M}\neq\emptyset$
for all $f\in\mbox{Iso}\left(N_{1};N_{2}\right)$ with $N_{1},N_{2}\in\mathcal{K^{<}}\left(M\right)$
when we translate the concept to this notation.
\begin{rem}
Let $N\in\mathcal{K^{<}}\left(M\right)$. Then the restriction map
$\pi_{N}:\Aut_{\left\{ N\right\} }\left(M\right)\rightarrow\Aut\left(N\right)$
is surjective: if $f\in Aut(N)$ then there is $f'\in Aut(M)$ such
that $f=f'\upharpoonright N=N$.
\end{rem}
\begin{defn}
For $A\subseteq M$, let $\mbox{cl}^{M}\left(A\right):=\bigcap\left\{ N\in\mathcal{K}\mid A\subseteq N\preccurlyeq_{\mathcal{K}}M\right\} $.
\end{defn}
\begin{rem}
This notion of closure appeared in \cite{MR2444267} in connection
with the study of the failure of tameness. It is worth noting here
that with this notion of closure, unlike in other related constructions,
closures \emph{do not necessarily belong} to the class $\mathcal{K}$.
Moreover, note that if $A\subseteq M$ and $\mbox{cl}^{M}\left(A\right)\neq M$,
then for every $m\in M\backslash\mbox{cl}^{M}\left(A\right)$ there
is $N\preccurlyeq_{\mathcal{K}}M$ such that $m\notin N$ and $\mbox{cl}^{M}\left(A\right)\subseteq N$.
\end{rem}
It is easy to see that the following holds.
\begin{enumerate}
\item $\mbox{cl}^{M}\left(N\right)=N$ for all $N\in K$ with $N\preccurlyeq_{\mathcal{K}}M$.
\item $\mbox{cl}^{M}\left(\mbox{cl}^{M}\left(A\right)\right)=\mbox{cl}^{M}\left(A\right)$
for all $A\subseteq M$.
\end{enumerate}
\begin{rem}
The definition above of $\mbox{cl}^{M}$ generalizes the first order
definition of the algebraic closure \emph{acl} with $\preccurlyeq_{\mathcal{K}}$
as elementary submodel.
\end{rem}
Let $\mathcal{C}:=\left\{ \mbox{cl}^{M}\left(A\right):A\subseteq M\mbox{ such that }\left|A\right|<\kappa\right\} $.
\begin{fact}
\begin{enumerate}
\item Suppose $A\subset M$, then $\left|\mbox{cl}^{M}\left(A\right)\right|\leq\left|A\right|+LS\left(K\right)$.
\item Small strong submodels of $M$ are closed i.e. $\mathcal{K}^{<}\left(M\right)\subseteq\mathcal{C}$.
\item If $A,B\in\mathcal{C}$, then $A\cap B\in\mathcal{C}$. 
\item If $A,B\in\mathcal{C}$, then there exists $N\in\mathcal{\mathcal{K}^{<}\left(M\right)}$
such that $A\cup B\subseteq N$.
\end{enumerate}
\end{fact}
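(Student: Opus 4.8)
The plan is to reduce all four items to two ingredients: the downward Löwenheim--Skolem axiom of the AEC $\mathcal{K}$ (which guarantees, for every $X\subseteq M$, a strong submodel $N'\preccurlyeq_{\mathcal{K}}M$ with $X\subseteq N'$ and $\left|N'\right|\leq\left|X\right|+LS(K)$), together with the formal properties of $\mbox{cl}^{M}$ already recorded before the statement, namely idempotence and the evident monotonicity $A\subseteq B\Rightarrow\mbox{cl}^{M}(A)\subseteq\mbox{cl}^{M}(B)$. The latter is immediate from the intersection definition, since enlarging $A$ only shrinks the family of strong submodels over which the intersection is taken. Crucially, I will \emph{not} need the closures themselves to lie in $\mathcal{K}$ --- which, as the Remark stresses, they generally do not.

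For (1), I would apply downward Löwenheim--Skolem to $A$ inside $M$ to obtain $N'\preccurlyeq_{\mathcal{K}}M$ with $A\subseteq N'$ and $\left|N'\right|\leq\left|A\right|+LS(K)$. Since $N'$ is one of the strong submodels of $M$ containing $A$, the intersection defining $\mbox{cl}^{M}(A)$ is contained in $N'$, so $\left|\mbox{cl}^{M}(A)\right|\leq\left|N'\right|\leq\left|A\right|+LS(K)$. Item (4) then follows by a second application of the same axiom: from (1), any $A,B\in\mathcal{C}$ have $\left|A\right|,\left|B\right|<\kappa$ (their generating sets are small and $LS(K)\leq\lambda<\kappa$), whence $\left|A\cup B\right|<\kappa$; applying downward Löwenheim--Skolem to $A\cup B$ produces $N\preccurlyeq_{\mathcal{K}}M$ with $A\cup B\subseteq N$ and $\left|N\right|<\kappa$, i.e. $N\in\mathcal{K}^{<}(M)$.

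For (2), given $N\in\mathcal{K}^{<}(M)$, the recorded identity $\mbox{cl}^{M}(N)=N$ exhibits $N$ as $\mbox{cl}^{M}$ of a set of size $<\kappa$ (namely $N$ itself), so $N\in\mathcal{C}$. For (3), I would first note that every member of $\mathcal{C}$ is $\mbox{cl}^{M}$-closed: if $A=\mbox{cl}^{M}(A_{0})$ then $\mbox{cl}^{M}(A)=A$ by idempotence. Now if $A,B\in\mathcal{C}$, monotonicity gives $\mbox{cl}^{M}(A\cap B)\subseteq\mbox{cl}^{M}(A)=A$ and likewise $\mbox{cl}^{M}(A\cap B)\subseteq B$, so $\mbox{cl}^{M}(A\cap B)\subseteq A\cap B$; the reverse inclusion is extensivity of $\mbox{cl}^{M}$. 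Hence $A\cap B=\mbox{cl}^{M}(A\cap B)$, and since $\left|A\cap B\right|\leq\left|A\right|<\kappa$ this displays $A\cap B$ as a member of $\mathcal{C}$.

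None of these steps is technically deep; the only point that deserves care is (3), precisely because $\mbox{cl}^{M}(A)$ need not be a strong submodel, so one cannot argue that $A\cap B$ is closed by invoking an intersection-of-strong-submodels axiom of $\mathcal{K}$. The clean route is to work purely with the closure operator and use that closed sets are automatically stable under intersection via monotonicity --- this is what I expect to be the (mild) crux, while everything else is a direct invocation of downward Löwenheim--Skolem.
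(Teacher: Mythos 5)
Your proof is correct and follows essentially the same route as the paper, whose own proof merely asserts that (1) and (3) are immediate and that (2) and (4) follow from $\left|M\right|=\kappa>\mbox{LS}\left(K\right)$, i.e.\ from the downward L\"{o}wenheim--Skolem axiom. You have simply supplied the details the paper leaves implicit: downward L\"{o}wenheim--Skolem for (1) and (4), the recorded identity $\mbox{cl}^{M}\left(N\right)=N$ for (2), and monotonicity plus idempotence of $\mbox{cl}^{M}$ for (3).
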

\begin{proof}
(1) and (3) are immediate. (2) and (4) follow from the fact that $\left|M\right|=\kappa>\mbox{LS}\left(K\right)$.
\end{proof}
We extend the two notations that we have defined before from elements
of $\mathcal{K}^{<}(M)$ to all closed sets.
\begin{defn}
Let $X,Y\in\mathcal{C}$. Define 
\[
\mbox{Iso}\left(X;Y\right):=\left\{ f\upharpoonright X:f\in Iso\left(N_{1},N_{2}\right)\text{ where \ensuremath{N_{1},N_{2}\in\mathcal{K}^{<}\left(M\right),X\subseteq N_{1},Y\subseteq N_{2}\text{ and }f\left[X\right]=Y}}\right\} 
\]
 and $\Aut\left(X\right):=\mbox{Iso}\left(X;X\right)$.
\end{defn}
\begin{cor}
The set $\left\{ \Aut_{X}\left(M\right):X\in\mathcal{C}\right\} $
forms a basis of open neighborhoods of identity.
\end{cor}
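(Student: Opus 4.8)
The plan is to check the two requirements for the family $\mathcal{B}:=\left\{ \Aut_X(M):X\in\mathcal{C}\right\}$ to be a basis of open neighborhoods of the identity: that every member is an open subgroup containing $\mathrm{id}$, and that $\mathcal{B}$ is cofinal in the neighborhood filter at $\mathrm{id}$ (which, together with the filter-base condition, is exactly what it means to be such a basis). Since each $\Aut_X(M)$ is manifestly a subgroup containing $\mathrm{id}$, the real work is in openness and cofinality, and both reduce to the Facts already established.

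For openness, fix $X\in\mathcal{C}$, say $X=\mathrm{cl}^{M}(A)$ with $\left|A\right|<\kappa$. The key point is that although $X$ is closed it need not be a strong submodel of $M$ (closures need not lie in $\mathcal{K}$), so one cannot call $\Aut_X(M)$ basic-open outright. Instead I first use Fact (1) to get $\left|X\right|\leq\left|A\right|+LS(K)<\kappa$, so that $X$ is small, and then Fact (4) (equivalently $\left|M\right|=\kappa>LS(K)$) to interpose a genuine $N\in\mathcal{K}^{<}(M)$ with $X\subseteq N$. Fixing $N$ pointwise forces fixing the subset $X$ pointwise, so $\Aut_N(M)\subseteq\Aut_X(M)$; and $\Aut_N(M)=\mathcal{O}_{\mathrm{id}_N}^{M}$ is a basic $\mathfrak{T}^{\mathrm{cl}}$-open neighborhood of $\mathrm{id}$. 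Invoking the general fact that a subgroup of a topological group which contains an open neighborhood of the identity is itself open, I conclude that $\Aut_X(M)=\bigcup_{g\in\Aut_X(M)}g\,\Aut_N(M)$ is open.

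For the basis property, cofinality is immediate from Fact (2): given any $\mathfrak{T}^{\mathrm{cl}}$-open $U\ni\mathrm{id}$, it contains a basic neighborhood $\Aut_N(M)$ with $N\in\mathcal{K}^{<}(M)$, and since $\mathcal{K}^{<}(M)\subseteq\mathcal{C}$ this $\Aut_N(M)$ already belongs to $\mathcal{B}$ and lies inside $U$. For completeness I also record the filter-base condition directly: given $X,Y\in\mathcal{C}$, Fact (4) produces $N\in\mathcal{K}^{<}(M)$ with $X\cup Y\subseteq N$, so $\Aut_N(M)\in\mathcal{B}$ and $\Aut_N(M)\subseteq\Aut_X(M)\cap\Aut_Y(M)$. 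I do not anticipate any genuine obstacle: the statement is essentially a bookkeeping consequence of Facts (1), (2) and (4). The only step requiring care is openness, where one must remember that an element $X\in\mathcal{C}$ may fail to be a strong submodel and therefore sandwich it inside some $N\in\mathcal{K}^{<}(M)$ before appealing to the subgroup-openness principle.
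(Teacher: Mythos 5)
Your proposal is correct and follows exactly the route the paper intends: the paper states this as an immediate corollary of the preceding Fact and offers no written proof, and your argument is precisely the natural filling-in, using Fact (1) and Fact (4) to sandwich each closed $X$ inside some $N\in\mathcal{K}^{<}\left(M\right)$ (so that $\Aut_{N}\left(M\right)\leqslant\Aut_{X}\left(M\right)$ makes the latter open, being a union of cosets of an open subgroup), Fact (2) for cofinality, and Fact (4) again for the filter-base condition. Your explicit care about the point that elements of $\mathcal{C}$ need not be strong submodels is exactly the right subtlety to flag, and it is what makes the statement have content beyond the definition of $\mathfrak{T}^{\mbox{cl}}$.
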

Let $\mathfrak{T}^{\mbox{cl}}$ be the topology that is generated
by the cosets of stabilizers $Aut_X(M)$ for $X\in\mathcal{C}$. It
is clear that $\left(\Aut\left(M\right),\mathfrak{T}^{\mbox{cl}}\right)$
is a topological group. This is the setup for the proof of our main
theorem (see section $\ref{sec:main-thm}$).
\begin{rem}
Similarly to the countable case in first-order logic we have the following
property: Suppose $M$ and $N$ are two models of the same uncountable
cardinality and we topologize them in the same way. Suppose $\alpha:\Aut\left(M\right)\rightarrow\Aut\left(N\right)$
is an embedding as abstract groups. Then the $SIP$ for $M$ implies
that $\alpha$ is continuous.
\end{rem}

\subsection{Some properties of the class of closed sets $\mathcal{C}$}

In this subsection we do not require the assumption that $M$ is homogeneous. 
\begin{lem}
\label{lem:ep} Let $X,Y\in\mathcal{C}$ and $f\in\mbox{Iso}\left(X;Y\right)$
be such that $\mathcal{O}_{f}^{M}\neq\emptyset$. Then, there exists
$Z\in\mathcal{C}$ and $f'\in\Aut\left(Z\right)$ such that $X\cup Y\subseteq Z$
, $f\leqslant f'$ and $\mathcal{O}_{f'}^{M}\neq\emptyset$.
\end{lem}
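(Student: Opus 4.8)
The plan is to start from the witness provided by the hypothesis and build a $g$-invariant closed set around $X\cup Y$. Since $\mathcal{O}_{f}^{M}\neq\emptyset$, fix $g\in\Aut(M)$ with $f\leqslant g$; in particular $g\upharpoonright X=f$ and $g[X]=Y$. The idea is to produce $Z\in\mathcal{C}$ with $X\cup Y\subseteq Z$ and $g[Z]=Z$, and then simply set $f':=g\upharpoonright Z$. By construction $f'$ is a bijection of $Z$ onto itself extending $f$, and $g$ itself witnesses $\mathcal{O}_{f'}^{M}\neq\emptyset$; so the whole statement reduces to finding such a $Z$ and checking that $f'$ really lies in $\Aut(Z)=\mbox{Iso}(Z;Z)$.

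The key technical point I would establish first is that automorphisms of $M$ commute with the closure operator, i.e. $g[\mbox{cl}^{M}(A)]=\mbox{cl}^{M}(g[A])$ for every $A\subseteq M$. This follows from the isomorphism-invariance axiom of an AEC: if $N\preccurlyeq_{\mathcal{K}}M$ then $g[N]\preccurlyeq_{\mathcal{K}}M$, and likewise for $g^{-1}$, so $g$ permutes the family $\{N:A\subseteq N\preccurlyeq_{\mathcal{K}}M\}$ onto the family $\{N':g[A]\subseteq N'\preccurlyeq_{\mathcal{K}}M\}$; since $g$ is a bijection it commutes with the intersection, giving the identity. Note that this step, and indeed the whole argument, uses only that $g$ is an automorphism and never uses homogeneity, in keeping with the standing assumption of the subsection.

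With this in hand I would take the $\langle g\rangle$-orbit $W:=\bigcup_{n\in\mathbb{Z}}g^{n}[X\cup Y]$. Each $g^{n}[X\cup Y]$ has cardinality $|X\cup Y|<\kappa$ and there are only countably many of them, so $W$ is small (here $\kappa$ is uncountable, as $\kappa>\lambda\geq\mbox{LS}(\mathcal{K})$), and $W$ is manifestly $g$-invariant, i.e. $g[W]=W$. Setting $Z:=\mbox{cl}^{M}(W)$ we obtain $Z\in\mathcal{C}$ with $X\cup Y\subseteq W\subseteq Z$, and the commuting identity yields $g[Z]=g[\mbox{cl}^{M}(W)]=\mbox{cl}^{M}(g[W])=\mbox{cl}^{M}(W)=Z$. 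Thus $f':=g\upharpoonright Z$ is a bijection of $Z$ onto $Z$ extending $f$.

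The last—and most delicate—step is to check that $f'$ genuinely belongs to $\Aut(Z)$ in the sense of the definition, i.e. that it is the restriction to $Z$ of an isomorphism between small strong submodels carrying $Z$ onto $Z$. Here I would invoke Fact (4): applied to $Z\in\mathcal{C}$ it furnishes $N_{1}\in\mathcal{K}^{<}(M)$ with $Z\subseteq N_{1}$. Then $N_{2}:=g[N_{1}]\preccurlyeq_{\mathcal{K}}M$ is again small, hence $N_{2}\in\mathcal{K}^{<}(M)$, and $Z=g[Z]\subseteq g[N_{1}]=N_{2}$; the map $g\upharpoonright N_{1}\colon N_{1}\to N_{2}$ is an isomorphism carrying $Z$ onto $Z$ whose restriction to $Z$ is $f'$, so $f'\in\mbox{Iso}(Z;Z)=\Aut(Z)$. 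Finally $g\in\mathcal{O}_{f'}^{M}$ shows $\mathcal{O}_{f'}^{M}\neq\emptyset$, completing the argument. The main obstacle is precisely this bookkeeping around the definition of $\Aut(Z)$: the closed set $Z$ need not itself be a strong submodel of $M$, so one cannot take $f'\in\Aut(Z)$ for granted and must exhibit the flanking small models $N_{1},N_{2}$ explicitly.
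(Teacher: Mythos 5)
Your proof is correct, but it takes a genuinely different route from the paper's. The paper picks $\sigma\in\mathcal{O}_{f}^{M}$ and runs the standard L\"owenheim--Skolem chain construction: it builds an increasing $\omega$-chain $N_{n}\in\mathcal{K}^{<}\left(M\right)$ with $N_{n}\supseteq N_{n-1}\cup\sigma\left[N_{n-1}\right]\cup\sigma^{-1}\left[N_{n-1}\right]$ and takes $Z:=N^{*}=\bigcup_{n}N_{n}$, using the unions (Tarski--Vaught chain) axiom of AECs to conclude $N^{*}\in\mathcal{K}^{<}\left(M\right)$; in particular the paper's $Z$ is a small \emph{strong submodel}, not merely a member of $\mathcal{C}$. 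You instead take the $\left\langle g\right\rangle$-orbit $W$ of $X\cup Y$ and set $Z:=\mathrm{cl}^{M}\left(W\right)$, resting on the equivariance identity $g\left[\mathrm{cl}^{M}\left(A\right)\right]=\mathrm{cl}^{M}\left(g\left[A\right]\right)$, which you correctly derive from isomorphism-invariance of $\preccurlyeq_{\mathcal{K}}$ plus the fact that bijections commute with intersections; your verification that $f'\in\Aut\left(Z\right)$ in the paper's official sense (via Fact (4) and $N_{2}:=g\left[N_{1}\right]$) is exactly the bookkeeping the definition demands, and your cardinality estimate $\left|W\right|\leq\max\left(\left|X\cup Y\right|,\aleph_{0}\right)<\kappa$ is sound. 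What each approach buys: yours is more economical in AEC axioms (no use of the unions axiom) and matches the stated conclusion $Z\in\mathcal{C}$ exactly; the paper's construction is stronger than the statement requires, and that surplus matters downstream --- the subsequent Corollaries and Lemma \ref{lem:ex-gen} reuse the chain argument (or invoke this lemma) precisely to obtain a $\sigma$-invariant element of $\mathcal{K}^{<}\left(M\right)$, a conclusion your closure-based $Z$ does not provide, since closed sets need not belong to the class (as the paper itself remarks). So your proof establishes the lemma as stated but could not be substituted for the paper's proof without weakening what the rest of the paper actually uses.
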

\begin{proof}
Let $\sigma\in\mathcal{O}_{f}^{M}$; then $\sigma\in Aut(M)$ and
$\sigma\restriction X=f$. Since $\mbox{LS}\left(K\right)<\kappa$,
there exists $N\in\mathcal{K}^{<}\left(M\right)$ such that $X\cup Y\subseteq N$.
Let $X_{1}:=N\cup\sigma\left[N\right]\cup\sigma^{-1}\left[N\right]$
and let $\lambda=\left|X_{1}\right|$. It is clear that $\lambda<\kappa$,
hence there exists $N_{1}\in\mathcal{K}^{<}\left(M\right)$ of cardinality
$\lambda$ such that $X_{1}\subseteq N_{1}$. Inductively, define
$X_{n}:=N_{n-1}\cup\sigma\left[N_{n-1}\right]\cup\sigma^{-1}\left[N_{n-1}\right]$
and $N_{n}\in\mathcal{K}^{<}\left(M\right)$ such that $X_{n}\subseteq N_{n}$
and $\left|N_{n}\right|=\lambda$ for all $n\in\mathbb{N}$ (always
possible since $\mbox{LS}\left(K\right)<\kappa$). We then have a
chain $N_{i}\prec_{\mathcal{K}}N_{j}$ for $i\leq j\in\mathbb{N}$.
Let $N^{*}:=\bigcup_{n\in\mathbb{N}}N_{n}$. Then clearly $\left|N^{*}\right|=\lambda$
and $N^{*}\in\mathcal{K}^{<}\left(M\right)$. Note that $\sigma\upharpoonright N^{*}\in\Aut\left(N^{*}\right)$.
Let $Z:=N^{*}$ and $f':=\sigma\upharpoonright N^{*}$.
\end{proof}
\begin{cor}
Let $\sigma\in\Aut\left(M\right)$ and assume $\lambda$ is a cardinal
with $LS\left(K\right)\leq\lambda<\kappa$, then there exists $N\in\mathcal{K}^{<}\left(M\right)$
with $\left|N\right|=\lambda$ such that $\sigma\upharpoonright N\in\Aut\left(N\right)$.
\end{cor}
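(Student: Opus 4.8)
The plan is to rerun the chain construction from the proof of Lemma~\ref{lem:ep}, the only genuinely new ingredient being control of the cardinality of the resulting submodel, which here must be \emph{exactly} $\lambda$ rather than merely bounded above.

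First I would use the L\"owenheim--Skolem property of $\mathcal{K}$ together with $\mathrm{LS}(K)\le\lambda<\kappa$ to seed the construction with a strong submodel $N_{0}\preccurlyeq_{\mathcal{K}}M$ of cardinality exactly $\lambda$: choose any $A\subseteq M$ with $|A|=\lambda$ and take $N_{0}\supseteq A$ with $N_{0}\preccurlyeq_{\mathcal{K}}M$ and $|N_{0}|\le|A|+\mathrm{LS}(K)=\lambda$; since $A\subseteq N_{0}$ forces $|N_{0}|\ge\lambda$, we get $|N_{0}|=\lambda$. Then I would build recursively a $\preccurlyeq_{\mathcal{K}}$-increasing chain $(N_{n})_{n\in\mathbb{N}}$ of members of $\mathcal{K}^{<}(M)$, each of cardinality $\lambda$, that absorbs $\sigma$ and $\sigma^{-1}$: setting $X_{n+1}:=N_{n}\cup\sigma[N_{n}]\cup\sigma^{-1}[N_{n}]$, one has $|X_{n+1}|\le 3\lambda=\lambda$ (as $\sigma$ is a bijection), so L\"owenheim--Skolem yields $N_{n+1}\in\mathcal{K}^{<}(M)$ with $X_{n+1}\subseteq N_{n+1}$ and $|N_{n+1}|=\lambda$; coherence of the AEC (two strong submodels of $M$, one contained in the other, are $\preccurlyeq_{\mathcal{K}}$-comparable) then gives $N_{n}\preccurlyeq_{\mathcal{K}}N_{n+1}$.

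Finally I would set $N:=\bigcup_{n\in\mathbb{N}}N_{n}$. Then $|N|=\aleph_{0}\cdot\lambda=\lambda$ since $\lambda$ is infinite, so $N\in\mathcal{K}^{<}(M)$, and by the union (chain) axiom of $\mathcal{K}$ we have $N\preccurlyeq_{\mathcal{K}}M$. To see $\sigma\upharpoonright N\in\Aut(N)$, take $x\in N$, say $x\in N_{n}$; then $\sigma(x)\in\sigma[N_{n}]\subseteq X_{n+1}\subseteq N_{n+1}\subseteq N$, and likewise $\sigma^{-1}(x)\in N$, so $\sigma$ restricts to a bijection of $N$ onto itself. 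Since $\sigma$ is an automorphism of $M$ and $N$ is a strong submodel, this restriction preserves the structure and hence lies in $\Aut(N)$.

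I do not anticipate a serious obstacle: the construction is essentially that of Lemma~\ref{lem:ep}, and the only point demanding attention is keeping the cardinality pinned at $\lambda$ throughout, which is handled by seeding with a model of size $\lambda$ and repeatedly exploiting $3\lambda=\aleph_{0}\cdot\lambda=\lambda$; the remaining routine ingredient is invoking the union axiom of the ambient AEC $\mathcal{K}$ to carry $\preccurlyeq_{\mathcal{K}}M$ up to the limit.
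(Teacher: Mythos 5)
Your proposal is correct and follows essentially the same route as the paper: the paper's proof explicitly says to rerun the chain construction of Lemma~\ref{lem:ep}, building a $\preccurlyeq_{\mathcal{K}}$-increasing chain $(N_{n})_{n\in\mathbb{N}}$ of models of size $\lambda$ closed under $\sigma$ and $\sigma^{-1}$ and taking the union. Your added care in seeding with a model of cardinality exactly $\lambda$ and invoking coherence and the union axiom just makes explicit the details the paper leaves implicit.
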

\begin{proof}
We use the same argument as the proof of Lemma \ref{lem:ep}: start
with $N_{0}$ and build up a $\prec_K$-increasing chain of models
$N_n$ such that $N_{n}\supseteq X_{n}:=N_{n-1}\cup\sigma\left[N_{n-1}\right]\cup\sigma^{-1}\left[N_{n-1}\right]$.
\end{proof}
\begin{rem}
So far, no special properties of $\kappa$ as a cardinality have been
used. For the next corollary we use $\kappa^{<\omega}=\kappa$.
\end{rem}
\begin{cor}
\label{cor:ext} Let $I$ be an index set with $\left|I\right|<\kappa$.
Assume $f_{i}\in Iso\left(X_{i};Y_{i}\right)$ with $X_{i},Y_{i}\in\mathcal{C}$
are such that $\mathcal{O}_{f_{i}}^{M}\not=\emptyset$, for $i\in I$.
Then there exists $Z\in\mathcal{C}$ and $f'_{i}\in\Aut_{\left\{ Z\right\} }\left(M\right)$
such that $\bigcup_{i\in I}X_{i}\cup\bigcup_{i\in I}Y_{i}\subseteq Z$,
$f_{i}\leqslant f'_{i}$ and $\mathcal{O}_{f_{i}'}^{M}\neq\emptyset$
for all $i\in I$.
\end{cor}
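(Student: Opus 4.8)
The plan is to run the closure construction of Lemma~\ref{lem:ep} \emph{simultaneously} over all the indices in $I$, producing a single closed set $Z$ that absorbs the action of a chosen witnessing automorphism for every $f_i$ at once. First, using the hypothesis $\mathcal{O}_{f_i}^{M}\neq\emptyset$, I would fix for each $i\in I$ an automorphism $\sigma_i\in\Aut(M)$ with $\sigma_i\restriction X_i=f_i$. The target $Z$ will be built as the union of an $\omega$-chain of small strong submodels that is closed under every $\sigma_i$ and every $\sigma_i^{-1}$; setting $f'_i:=\sigma_i$ will then do the job, since closure under $\sigma_i,\sigma_i^{-1}$ forces $\sigma_i[Z]=Z$, i.e. $\sigma_i\in\Aut_{\{Z\}}(M)$.

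The construction proceeds as follows. Since $\kappa^{<\kappa}=\kappa$ forces $\kappa$ to be regular, and $|I|<\kappa$ with each $|X_i|,|Y_i|<\kappa$, the set $\bigcup_{i\in I}X_i\cup\bigcup_{i\in I}Y_i$ has cardinality $<\kappa$; using $\mbox{LS}(K)<\kappa$ together with Fact item~(4) I would choose $N_0\in\mathcal{K}^{<}(M)$ containing it. Inductively, given $N_n\in\mathcal{K}^{<}(M)$ I would pick (by downward L\"owenheim--Skolem for the AEC) a model $N_{n+1}\in\mathcal{K}^{<}(M)$ with $N_n\prec_{\mathcal{K}}N_{n+1}$ and $N_{n+1}\supseteq N_n\cup\bigcup_{i\in I}\bigl(\sigma_i[N_n]\cup\sigma_i^{-1}[N_n]\bigr)$; this is possible because the displayed set has cardinality at most $|I|\cdot|N_n|<\kappa$ by regularity of $\kappa$. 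Let $Z:=N^{*}=\bigcup_{n}N_n$. By the unions axiom $N^{*}\preccurlyeq_{\mathcal{K}}M$, and $|N^{*}|<\kappa$ as an $\omega$-union of small sets in a regular uncountable $\kappa$; hence $Z\in\mathcal{K}^{<}(M)\subseteq\mathcal{C}$ by Fact item~(2), and $\bigcup_{i}X_i\cup\bigcup_{i}Y_i\subseteq N_0\subseteq Z$.

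To finish, fix $i\in I$. Every element of $N^{*}$ lies in some $N_n$, so $\sigma_i[N_n]\subseteq N_{n+1}\subseteq N^{*}$ and likewise $\sigma_i^{-1}[N_n]\subseteq N^{*}$; thus $\sigma_i[N^{*}]\subseteq N^{*}$ and $N^{*}\subseteq\sigma_i[N^{*}]$, giving $\sigma_i[Z]=Z$, that is $\sigma_i\in\Aut_{\{Z\}}(M)$. Taking $f'_i:=\sigma_i$ yields $f_i\leqslant f'_i$, $f'_i\in\Aut_{\{Z\}}(M)$, and $\mathcal{O}_{f'_i}^{M}\ni\sigma_i\neq\emptyset$, as required. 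The construction is routine once framed this way; the only genuinely delicate point — and the place where $|I|<\kappa$ and the regularity of $\kappa$ are essential — is the uniformity, namely guaranteeing that one and the same small $Z$ works for all $|I|$ indices at once without its cardinality escaping $\kappa$.
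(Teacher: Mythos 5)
Your proposal is correct and is essentially the paper's own argument: the paper proves this corollary by repeating the chain construction of Lemma~\ref{lem:ep}, closing each $N_n$ under all the chosen witnesses $\sigma_i\in\mathcal{O}_{f_i}^{M}$ and their inverses simultaneously, using $\mathrm{LS}(K)<\kappa$ and $\kappa^{<\kappa}=\kappa$ exactly as you do. Your write-up just makes explicit the cardinality bookkeeping (regularity of $\kappa$, the $\omega$-union staying small) that the paper leaves implicit.
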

\begin{proof}
Use the facts that $\mbox{LS}\left(K\right)<\kappa$ and $\kappa^{<\kappa}=\kappa$
and repeat the argument in the proof of Lemma \ref{lem:ep}, by closing
the sets under all the $f_{i}'$ and $f_{i}'^{-1}$s for some $f_{i}'\in\mathcal{O}_{f}^{M}$
with $i\in I$ in each step.
\end{proof}
\begin{cor}
\label{cor:ep} Let $I$ be an index set with $\left|I\right|=\lambda$
such that $LS\left(K\right)\leq\lambda<\kappa$. Assume $\sigma_{i}\in\Aut\left(M\right)$
with $i\in I$. Then there exists $N\in\mathcal{K}^{<}\left(M\right)$
with $\left|N\right|=\lambda$ such that $\sigma{}_{i}\upharpoonright N\in\Aut\left(N\right)$
for all $i\in I$. 
\end{cor}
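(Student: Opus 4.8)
The plan is to run the same back-and-forth closure construction used in the proof of Lemma~\ref{lem:ep} and of Corollary~\ref{cor:ext}, but now closing simultaneously under all $\lambda$ automorphisms $\sigma_i$ and their inverses. First I would fix some $N_0\in\mathcal{K}^{<}\left(M\right)$ with $\left|N_0\right|=\lambda$, which is possible since $\mbox{LS}\left(K\right)\leq\lambda<\kappa$. Then, inductively, given $N_{n-1}\in\mathcal{K}^{<}\left(M\right)$ with $\left|N_{n-1}\right|=\lambda$, I set
\[
X_{n}:=N_{n-1}\cup\bigcup_{i\in I}\left(\sigma_{i}\left[N_{n-1}\right]\cup\sigma_{i}^{-1}\left[N_{n-1}\right]\right).
\]
Since $\left|I\right|=\lambda$ and each $\sigma_{i}$ is a bijection of $M$, we have $\left|X_{n}\right|\leq\lambda\cdot\lambda=\lambda$, so the Löwenheim--Skolem axiom provides $N_{n}\in\mathcal{K}^{<}\left(M\right)$ with $X_{n}\subseteq N_{n}$, $N_{n-1}\prec_{\mathcal{K}}N_{n}$ and $\left|N_{n}\right|=\lambda$.

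Next I would put $N:=\bigcup_{n\in\mathbb{N}}N_{n}$. By the unions axiom of the AEC $\mathcal{K}$, the union of this $\prec_{\mathcal{K}}$-increasing $\omega$-chain is a strong submodel $N\preccurlyeq_{\mathcal{K}}M$, and $\left|N\right|=\lambda\cdot\aleph_{0}=\lambda<\kappa$, so $N\in\mathcal{K}^{<}\left(M\right)$. Finally, for each $i\in I$ I would verify that $\sigma_{i}\left[N\right]=N$: any $x\in N$ lies in some $N_{n}$, whence $\sigma_{i}\left(x\right)\in\sigma_{i}\left[N_{n}\right]\subseteq N_{n+1}\subseteq N$ and likewise $\sigma_{i}^{-1}\left(x\right)\in N_{n+1}\subseteq N$. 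Thus $\sigma_{i}$ maps $N$ bijectively onto itself, and being the restriction of an automorphism of $M$ to the strong submodel $N$ (whose image $\sigma_{i}\left[N\right]=N$ is again a strong submodel), we get $\sigma_{i}\upharpoonright N\in\Aut\left(N\right)$ for all $i\in I$, as required.

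The construction itself is routine; the only points that need care are the cardinal arithmetic and the closure of $\mathcal{K}^{<}\left(M\right)$ under the particular union used. The arithmetic works because $\lambda$ is infinite, so $\lambda\cdot\lambda=\lambda$ keeps each $X_{n}$ of size $\lambda$, and the countable union does not push the size up to $\kappa$. The one genuine subtlety to flag is that $\mathcal{K}^{<}\left(M\right)$ is only \emph{almost} an AEC, failing the unions axiom in general; here we are saved precisely because the relevant chain has length $\omega$ and bounded size $\lambda<\kappa$, so its union genuinely remains a small strong submodel of $M$ --- exactly the phenomenon already exploited in Lemma~\ref{lem:ep}.
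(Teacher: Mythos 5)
Your proposal is correct and follows exactly the route the paper intends: its proof of this corollary is just the one-line remark ``As in the proof of Lemma~\ref{lem:ep}'', i.e.\ the same $\omega$-chain construction, now closing under all $\lambda$-many $\sigma_{i}$ and $\sigma_{i}^{-1}$ at once, which is what you carry out (your cardinal check $\lambda\cdot\lambda=\lambda$ and the observation that an $\omega$-chain of size-$\lambda$ models stays small are precisely the implicit details). Your closing remark about $\mathcal{K}^{<}\left(M\right)$ failing the unions axiom only for ``too long'' chains is an accurate reading of why the paper's argument goes through here.
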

\begin{proof}
As in the proof of Lemma \ref{lem:ep}.
\end{proof}

\section{Generic sequences of automorphisms }

This section sets up the tools for the main proof: generic sequences
of automorphisms and strong amalgamation bases. We provide the basic
notions (the first one is adapted from \cite{MR1204064} to our context,
the second one is new) and then derive the existence of large families
of generic sequences of automorphisms.
\begin{defn}
\label{def:comp} Suppose $M_{0},M_{1},M_{2}\in\mathcal{K}^{<}\left(M\right)$
and $M_{0}\prec_{\mathcal{K}}M_{i}$ for $i=1,2$. Let $\gamma\text{\textsuperscript{0}}$
be a sequence of automorphisms of $M_{0}$. Assume $\gamma^{1}$ and
$\gamma^{2}$ are sequences of automorphisms of $M_{1}$ and $M_{2}$;
respectively, both extending $\gamma^{0}$. We say $\gamma\text{\textonesuperior}$
and $\gamma\text{\texttwosuperior}$ are \emph{compatible} over $M_{0}$
if there exist $M_{3}\in\mathcal{K}^{<}\left(M\right)$ and $\alpha_{1},\alpha_{2}\in\Aut_{M_{0}}\left(M\right)$
such that $\alpha_{1}\left[M_{1}\right],\alpha_{2}\left[M_{2}\right]\prec_{\mathcal{K}}M_{3}$
and $\gamma^{3}\in\Aut\left(M_{3}\right)$ extends both $\alpha_{1}\circ\gamma^{1}\circ\alpha_{1}^{-1}\restriction\alpha_{1}\left[M_{1}\right]$
and $\alpha_{2}\circ\gamma^{2}\circ\alpha_{2}^{-1}\restriction\alpha_{2}\left[M_{2}\right]$
(this is equivalent to requiring that $\gamma^{3}\circ\alpha_{1}\restriction N_{1}=\alpha_{1}\circ\gamma^{1}$
and $\gamma^{3}\circ\alpha_{2}\restriction N_{2}=\alpha_{2}\circ\gamma^{2}$). 
\end{defn}
Notice that in the definition above for a sequence of automorphisms
we allow repetitions of automorphisms. 

We now define a central notion in the proof of the main theorem: generic
sequences of automorphisms (and the particular case of a generic automorphism).
\begin{defn}
\label{def:gen} Suppose $I$ is an index set with $\left|I\right|<\kappa$
and let $\gamma=\left(g_{i}:i\in I\right)$ be a sequence of automorphisms
of $M$. We say $\gamma$ is a \emph{generic sequence of automorphisms
}if whenever $N\in\mathcal{\mathcal{K}}^{<}\left(M\right)$ is such
that $\gamma\upharpoonright N$ is a sequence of automorphisms of
$N$ and $N_{1}\in\mathcal{K}^{<}\left(M\right)$ is such that $N\prec_{\mathcal{K}}N_{1}$
and $\theta=\left(t_{i}:i\in I\right)$ is a sequence of automorphisms
of $N_{1}$ extending $\gamma\upharpoonright N$ (i.e. $g_{i}\upharpoonright N\leqslant t_{i}$
for all $i\in I$), \uline{then} if $\gamma\upharpoonright N'$
and $\theta$ are compatible over $N$ for some $N'\in\mathcal{K}^{<}\left(M\right)$,
there exists $\alpha\in\Aut_{N}\left(M\right)$ such that $\gamma$
extends $\alpha\circ\theta\circ\alpha^{-1}$ (or equivalently $\gamma^{\alpha}:=\left(g_{i}^{\alpha}:i\in I\right)$
extends $\theta$).
\end{defn}
We abuse language by saying that ``$g$ is a \emph{generic automorphism}''
when $\gamma=\left(g\right)$ is a constant generic sequence of automorphisms. 
\begin{rem}
Suppose $I$ is an index set with $\left|I\right|<\kappa$ and let
$\gamma=\left(g_{i}:i\in I\right)$ be generic. Then $\gamma^{\alpha}$
is generic for $\alpha\in\Aut\left(M\right)$. 
\end{rem}
\begin{lem}
Let $M_{0}\in\mathcal{K}^{<}\left(M\right)$ and $I$ be an index
set with cardinality less than $\kappa$. Suppose $\gamma=\left(g_{i}:i\in I\right)$
and $\theta=\left(t_{i}:i\in I\right)$ are two generic families of
automorphisms of $M$ such that $\gamma\upharpoonright M_{0}=\theta\upharpoonright M_{0}\in\Aut\left(M_{0}\right)$.
Then there exists $\alpha\in\Aut_{M_{0}}\left(M\right)$ such that
$\theta^{\alpha}=\gamma$.
\end{lem}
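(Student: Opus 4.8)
The plan is to construct $\alpha$ as the union of an increasing, continuous chain of partial isomorphisms produced by a back-and-forth argument of length $\kappa$. Unwinding the conjugation convention $g^{\alpha}=\alpha^{-1}\circ g\circ\alpha$, the goal $\theta^{\alpha}=\gamma$ amounts to $\alpha\circ g_{i}\circ\alpha^{-1}=t_{i}$ for every $i\in I$. So I would build models $A_{\xi},B_{\xi}\in\mathcal{K}^{<}(M)$ with $M_{0}\preccurlyeq_{\mathcal{K}}A_{\xi},B_{\xi}$ together with isomorphisms $\beta_{\xi}:A_{\xi}\to B_{\xi}$, maintaining the invariant that $\beta_{\xi}\restriction M_{0}=\mathrm{id}$, that both $\gamma\restriction A_{\xi}$ and $\theta\restriction B_{\xi}$ are sequences of automorphisms (closure arranged via Corollary \ref{cor:ep}), and that $\beta_{\xi}$ conjugates $\gamma\restriction A_{\xi}$ to $\theta\restriction B_{\xi}$, i.e. $\beta_{\xi}\circ g_{i}=t_{i}\circ\beta_{\xi}$ on $A_{\xi}$ for all $i$. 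The base case takes $A_{0}=B_{0}=M_{0}$ and $\beta_{0}=\mathrm{id}_{M_{0}}$, for which the conjugation invariant is exactly the hypothesis $\gamma\restriction M_{0}=\theta\restriction M_{0}$. Fixing an enumeration $(m_{\xi}:\xi<\kappa)$ of $M$ and forcing $m_{\xi}\in A_{\xi+1}$ at ``forth'' steps and $m_{\xi}\in B_{\xi+1}$ at ``back'' steps guarantees that $\alpha:=\bigcup_{\xi<\kappa}\beta_{\xi}$ is a bijection of $M$, hence an automorphism fixing $M_{0}$ pointwise with $\alpha g_{i}\alpha^{-1}=t_{i}$, as required.

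The heart of the argument is the successor ``forth'' step, where homogeneity and genericity combine. Given $\beta_{\xi}:A_{\xi}\to B_{\xi}$ and an element $m$ to be absorbed into the domain, I would first use Corollary \ref{cor:ep} to pick $A'\in\mathcal{K}^{<}(M)$ with $A_{\xi}\cup\{m\}\subseteq A'$ and $\gamma\restriction A'$ a sequence of automorphisms of $A'$. By homogeneity of $M$, extend $\beta_{\xi}$ to some $\sigma\in\Aut(M)$; then $\theta':=(\sigma g_{i}\sigma^{-1}\restriction\sigma[A']:i\in I)$ is a sequence of automorphisms of $\sigma[A']\succcurlyeq_{\mathcal{K}}B_{\xi}$, and a short computation using $\sigma\restriction A_{\xi}=\beta_{\xi}$ together with the invariant shows $\theta'\restriction B_{\xi}=\theta\restriction B_{\xi}$, so $\theta'$ extends $\theta\restriction B_{\xi}$. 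Now I would apply the genericity of $\theta$ (Definition \ref{def:gen}) with $N=B_{\xi}$, $N_{1}=\sigma[A']$ and the extension $\theta'$, obtaining $\rho\in\Aut_{B_{\xi}}(M)$ with $\theta$ extending $\rho\circ\theta'\circ\rho^{-1}$. Setting $A_{\xi+1}=A'$, $B_{\xi+1}=\rho\sigma[A']$ and $\beta_{\xi+1}=\rho\sigma\restriction A'$ completes the step: since $\rho$ fixes $B_{\xi}=\beta_{\xi}[A_{\xi}]$ pointwise, $\beta_{\xi+1}$ extends $\beta_{\xi}$, and unwinding $\theta'$ yields $\beta_{\xi+1}g_{i}\beta_{\xi+1}^{-1}=t_{i}$ on $B_{\xi+1}$. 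The ``back'' step is entirely symmetric, applied to $\beta_{\xi}^{-1}$ and invoking the genericity of $\gamma$ in place of that of $\theta$.

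At limit stages I would take unions: because $\kappa^{<\kappa}=\kappa$ forces $\kappa$ to be regular, a union of fewer than $\kappa$ small models stays small, and the union axiom of the ambient AEC $\mathcal{K}$ keeps it a strong submodel of $M$; the closure and conjugation invariants pass to the union, so continuity comes for free. The step I expect to be the genuine obstacle is the verification, at each successor stage, of the \emph{compatibility} hypothesis that genericity demands --- namely that $\theta\restriction N'$ and $\theta'$ are compatible over $B_{\xi}$ in the sense of Definition \ref{def:comp} (and symmetrically for $\gamma$ in the back step). This is exactly where the standing assumption that $\mathcal{K}^{<}(M)$ is a \emph{strong amalgamation class} must be used: one has to amalgamate the two extensions $\theta\restriction N'$ and $\theta'$ of the common sequence $\theta\restriction B_{\xi}$ into a single sequence of automorphisms over a common model. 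Arranging the amalgamation so that it respects the automorphisms, and not merely the underlying strong submodels, is the delicate point, and I would isolate it as the sole place where the structural hypothesis on $\mathcal{K}^{<}(M)$ is genuinely needed.
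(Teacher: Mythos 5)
Your proposal is correct and follows essentially the same route as the paper's proof: a back-and-forth of length $\kappa$ starting from $\mathrm{id}_{M_{0}}$, whose successor steps close under the relevant sequence (Corollary \ref{cor:ep}), extend the current partial isomorphism to a global automorphism of $M$ by homogeneity, and then correct it by an automorphism fixing the current small model pointwise, supplied by genericity of $\theta$ (forth) or of $\gamma$ (back), with unions at limit stages. The compatibility obstacle you isolate at the end is genuine, but note that the paper's own proof does not address it either: it invokes Definition \ref{def:gen} without verifying the compatibility hypothesis, which is harmless only because the lemma is applied later under the strong amalgamation assumption (which, strictly speaking, is not a standing hypothesis at the point where this lemma is stated), and under that assumption any two extensions of a common sequence are automatically compatible.
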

\begin{proof}
Similar to the argument in \cite{MR1204064}, we use back and forth.
Since $\theta$ and $\gamma$ are \emph{cofinal} in $\mathcal{K}^{<}\left(M\right)$
(see Corollary \ref{cor:ep}), we claim that we can build a chain
of models $N_{j}^{\theta}\in\mathcal{K}^{<}\left(M\right)$ and $N_{j}^{\gamma}\in\mathcal{K}^{<}\left(M\right)$
for $j<\kappa$, such that $\bigcup_{j<\kappa}N_{j}^{\theta}=\bigcup_{j<\kappa}N_{j}^{\gamma}=M$;
and $\theta\upharpoonright N_{j}^{\theta}\in\Aut\left(N_{j}^{\theta}\right)$
and $\gamma\upharpoonright N_{j}^{\gamma}\in\Aut\left(N_{j}^{\gamma}\right)$
and build partial isomorphisms $\alpha_{i}$ for $i<\kappa$ using
the back and forth such that $\alpha:=\bigcup_{j<\kappa}\alpha_{j}$
is the desired automorphism. Let $\alpha_{0}=\mbox{id}_{M_{0}}$ and
$N_{0}^{\theta}=N_{0}^{\gamma}=M_{0}$. If $i$ is limit ordinal,
let $N_{i}^{\theta}:=\bigcup_{j<i}N_{j}^{\theta}$, $N_{i}^{\gamma}:=\bigcup_{j<i}N_{j}^{\gamma}$
and $\alpha_{i}:=\bigcup_{j<i}\alpha_{j}$. Suppose $i=j+1$, without
loss of generality we do the ``forth'' step. Let $N_{i}^{\theta}$
be an element in $\mathcal{K}^{<}\left(M\right)$ such that $N_{j}^{\theta}\preccurlyeq_{\mathcal{K}}N_{i}^{\theta}$
and $\theta\upharpoonright N_{i}^{\theta}\in\Aut\left(N_{i}^{\theta}\right)$;
note that existence of such a model follows from Corollary \ref{cor:ep}
or proof of Lemma \ref{lem:ep}. %
{} Since $\gamma$ is generic, then there exists $\beta\in\Aut_{\alpha'_{j}\left[N_{j}^{\theta}\right]}\left(M\right)$
such that $\gamma$ extends $\beta\circ\theta\circ\beta^{-1}$ and
$\alpha'_{j}\in\mathcal{O}_{\alpha_{j}}^{M}$. Let $\alpha_{i}:=\alpha'_{j}\circ\beta\upharpoonright N_{i}^{\theta}$.
Let then $\alpha:=\bigcup\alpha_{j}$.
\end{proof}
It is an interesting question to determine sufficient conditions for
a generic sequence of automorphisms to exist. We now provide one condition
that will guarantee precisely that.
\begin{defn}
\label{Strong-Amalgamation-Base} Assume $I$ is an index set with
cardinality less than $\kappa$. Suppose $N_{0}\in\mathcal{K}^{<}\left(M\right)$
and $\gamma^{0}=\left(g_{i}^{0}:i\in I\right)$ is a sequence of automorphisms
of $N_{0}$. We say $\left(N_{0},\gamma^{0}\right)$ is a \emph{strong
amalgamation base} if for all $N_{1},N_{2}\in\mathcal{K}^{<}\left(M\right)$
with $N_{0}\prec_{\mathcal{K}}N_{1},N_{2}$; all $\gamma^{1}$ and
$\gamma^{2}$ sequences of automorphisms of $N_{1}$ and $N_{2}$
that extend $\gamma\text{\textsuperscript{0}}$; respectively, are
compatible over $N_{0}$. We say $\mathcal{K}^{<}\left(M\right)$
is a \emph{strong amalgamation class} if $\left(N,\gamma\right)$
is a strong amalgamation base for every $N\in$ $\mathcal{K}^{<}\left(M\right)$
and $\gamma=\left(g_{i}:i\in I\right)$ a sequence of automorphisms
of $N$.
\end{defn}
\begin{rem*}
As we see next, this notion guarantees the existence of generic sequences
of automorphisms. Its importance lies in the fact of being able to
``amalgamate coherently'' sequences of isomorphisms, while \emph{also}
doing an amalgam of the domains!
\end{rem*}
\begin{lem}
\label{lem:ex-gen} Suppose $\mathcal{K}^{<}\left(M\right)$ is a
strong amalgamation class. Then generic sequences of automorphisms
with index set of arbitrary cardinality less than $\kappa$ exist.
\end{lem}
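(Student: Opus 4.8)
The goal is to construct, for any index set $I$ with $|I|<\kappa$, a generic sequence of automorphisms $\gamma=(g_i:i\in I)$ of $M$. The plan is to build $\gamma$ by a transfinite recursion of length $\kappa$, producing along the way an exhaustive $\prec_{\mathcal{K}}$-increasing chain $(N_\xi:\xi<\kappa)$ of small models with $\bigcup_{\xi<\kappa}N_\xi=M$, together with sequences $\gamma^\xi=(g_i^\xi:i\in I)$ of automorphisms of $N_\xi$ that cohere (so $\gamma^\xi\upharpoonright N_\eta=\gamma^\eta$ for $\eta<\xi$) and whose union is the desired $\gamma$. The construction must anticipate and defeat every potential ``genericity challenge''; since there are at most $\kappa$ such challenges (using $\kappa^{<\kappa}=\kappa$ to count the relevant triples of small models together with sequences of isomorphisms between them), a standard bookkeeping enumerating them in order type $\kappa$ lets us handle each one at some stage.

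First I would fix the bookkeeping. A genericity challenge consists of a small $N$ on which the current sequence is already a sequence of automorphisms, an extension $N\prec_{\mathcal{K}}N_1$, and a sequence $\theta=(t_i:i\in I)$ of automorphisms of $N_1$ extending $\gamma\upharpoonright N$ which is compatible (over $N$) with $\gamma\upharpoonright N'$ for some small $N'$; the demand is to find $\alpha\in\Aut_N(M)$ with $\gamma^\alpha\geqslant\theta$. Because $|I|<\kappa$, $\kappa^{<\kappa}=\kappa$ and every small model has at most $\kappa$ automorphisms (this follows from the cardinal arithmetic together with Corollary \ref{cor:ep}), the set of all such challenges has cardinality $\kappa$, so I can enumerate them as $(c_\xi:\xi<\kappa)$, with each challenge appearing cofinally often. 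At stage $\xi$ I process $c_\xi$ \emph{provided} its data is already absorbed into the current $N_\xi$ and $\gamma^\xi$; otherwise I postpone it, which is harmless since it recurs cofinally.

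The heart of the recursion is the successor step, and this is where the strong amalgamation hypothesis does the essential work. Suppose at stage $\xi$ I must meet the challenge given by $N\prec_{\mathcal{K}}N_1$ and $\theta$ on $N_1$, with $N\preccurlyeq_{\mathcal{K}}N_\xi$ and $\gamma^\xi\upharpoonright N$ a sequence of automorphisms of $N$ equal to $\theta\upharpoonright N$. Since $\mathcal{K}^{<}(M)$ is a strong amalgamation class, $(N,\gamma^\xi\upharpoonright N)$ is a strong amalgamation base, so the two extensions $\gamma^\xi\upharpoonright N_\xi$ (on $N_\xi$) and $\theta$ (on $N_1$) of the common $\gamma^\xi\upharpoonright N$ are \emph{compatible} over $N$ in the sense of Definition \ref{def:comp}. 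That yields $M_3\in\mathcal{K}^{<}(M)$, automorphisms $\alpha_1,\alpha_2\in\Aut_N(M)$ with $\alpha_1[N_\xi],\alpha_2[N_1]\prec_{\mathcal{K}}M_3$, and a sequence $\gamma^3$ of automorphisms of $M_3$ simultaneously extending the conjugated copies of $\gamma^\xi\upharpoonright N_\xi$ and of $\theta$. Composing with $\alpha_1$ and pulling back, I obtain an $\alpha\in\Aut_N(M)$ moving $N_\xi$ and its sequence into a common small model inside which a copy of $\theta$ now sits above a copy of $\gamma^\xi$; after closing up via Lemma \ref{lem:ep} and Corollary \ref{cor:ext} I extend to a new small model $N_{\xi+1}\succcurlyeq_{\mathcal{K}}N_\xi$ carrying the enlarged sequence $\gamma^{\xi+1}$, arranged so that the witnessing $\alpha$ realizes $\theta$ as required. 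Limit stages are handled by unions, using that $(\mathcal{K}^{<}(M),\prec_{\mathcal{K}})$ is closed under the small chains we build and that $|N_\xi|<\kappa$ is preserved.

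The main obstacle is the coherence of the witnesses across stages: each challenge asks for a \emph{global} conjugating $\alpha\in\Aut_N(M)$, whereas the amalgamation base only supplies \emph{local} data over small models. The resolution is that the back-and-forth machinery of the preceding lemma is exactly what converts the local compatibility produced by the strong amalgamation base into a genuine automorphism of $M$; thus at each successor step I invoke that lemma's technique to upgrade the amalgam of domains and sequences into an honest $\alpha\in\Aut_N(M)$, and at the end I set $\gamma:=\bigcup_{\xi<\kappa}\gamma^\xi$. Since every challenge to genericity was listed and met cofinally, the resulting $\gamma$ satisfies Definition \ref{def:gen}, completing the proof.
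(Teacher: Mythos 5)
Your construction is, at its core, the paper's own proof: both build a $\prec_{\mathcal{K}}$-increasing tower of length $\kappa$ of pairs (small model, sequence of automorphisms) exhausting $M$, count the potential challenges by $\kappa^{<\kappa}=\kappa$ and enumerate them, absorb each challenge at a successor stage by invoking the strong amalgamation hypothesis and pulling the amalgam back so the current model stays fixed, and take unions at limits and at the end. The only organizational differences are that you enumerate challenges over arbitrary small bases and process each one once its base is absorbed into the tower, whereas the paper fixes a single base pair $(M_{0},f_{0})$, enumerates the set $E$ of all pairs extending it, and reduces an arbitrary genericity challenge to an element of $E$ in a separate verification phase (its two ``cases''); also, you sketch the argument for arbitrary $I$ while the paper writes out only the singleton case. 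Your successor step (third paragraph) is correct as written. However, your final paragraph should be deleted, as it is both unnecessary and, taken literally, wrong: the ``main obstacle'' you describe does not exist, because compatibility in Definition \ref{def:comp} already supplies \emph{global} automorphisms $\alpha_{1},\alpha_{2}\in\Aut_{N}\left(M\right)$ of $M$ itself, not merely local maps between small models, so the required conjugator $\alpha=\alpha_{1}^{-1}\circ\alpha_{2}\in\Aut_{N}\left(M\right)$ comes straight from the definition --- exactly as you yourself use it in the successor step. Moreover, the proposed ``resolution'' --- invoking the back-and-forth technique of the preceding lemma at each successor stage --- would be circular if it were actually needed: that back-and-forth goes through only because \emph{both} sequences are assumed generic, and in the middle of your construction neither one is.
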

\begin{proof}
We write the proof for the case when $I\text{ is a singleton}$. The
proof for the general case (arbitrary $I$) is not essentially different,
but is more cumbersome; therefore we address here only the case when
the sequence consists of a single automorphism.

We will build a generic automorphism $f$ as the union of a tower
of automorphisms of some elements of $\mathcal{K}^{<}(M)$. Let $M_{0}\in\mathcal{K}^{<}\left(M\right)$
and $f_{0}$ be an arbitrary automorphism of $M_{0}$. Since $M$
is homogeneous, we know that $\mathcal{O}_{f_{0}}^{M}\neq\emptyset$.
Now let $E:=\left\{ \left(N,e\right):M_{0}\prec_{\mathcal{K}}N\in\mathcal{K}^{<}\left(M\right)\mbox{ and }e\in\Aut\left(N\right)\mbox{ with }f_{0}\leqslant e\right\} $.
Since $\kappa^{<\kappa}=\kappa$, using Lemma \ref{lem:ep}, we have
that $\left|E\right|\leq\kappa$. We enumerate $E$ as $\left\{ \left(N_{i},e_{i}\right)\left|i<\kappa\right.\right\} $.
We now build a sequence of pairs $\left(M_{i},f_{i}\right)$ for $i<\kappa$
such that $\left(M_{i},f_{i}\right)\in E$, $M_{i}\prec_{\mathcal{K}}M_{j}$
and $f_{i}\leq f_{j}$ for $i\leq j<\kappa$. Moreover, we build the
sequence $\left(M_{i}\right)$ in such a way that $M=\bigcup_{i\in\kappa}M_{i}$
(for this, we enumerate $M$ as $\left\{ a_{i}\mid i<\kappa\right\} $
and demand that $a_{i}\in M_{i+1}$ for every $i<\kappa$).
\begin{itemize}
\item If $i$ is a limit ordinal then let $M_{i}=\bigcup_{\beta<i}M_{\beta}$
and $f_{i}=\bigcup_{\beta<i}f_{\beta}$; note that the cardinal assumption
$\kappa^{<\kappa}=\kappa$ implies that $M_{i}\in\mathcal{K}^{<}\left(M\right)$. 
\item Suppose $i$ is successor and $i=j+1$. Consider $\left(N_{j},e_{j}\right)\in E$.
Since $\mathcal{K^{<}}\left(M\right)$ is a strong amalgamation class,
there exist $M_{i}\in\mathcal{K^{<}}\left(M\right)$ and $f_{i}\in\Aut\left(M_{i}\right)$
such that $M_{j}\prec_{\mathcal{K}}M_{i}$, $f_{j}\leqslant f_{i}$,
$\alpha\in\Aut_{M_{j}}\left(M\right)$ and $e_{j}^{\alpha}:=\alpha\circ e_{j}\circ\alpha^{-1}\upharpoonright\alpha\left[N_{j}\right]\leqslant f_{i}$. 
\end{itemize}
Then let $f:=\bigcup_{i\in\kappa}f_{i}$ .

We claim $f$ is a generic automorphism. There are two cases:
\begin{enumerate}
\item Suppose $M_{0}\subseteq N_{1}$,$N\in\mathcal{K^{<}}\left(M\right)$
and $g\in\Aut\left(M\right)$ such that $N_{1}\prec_{\mathcal{K}}N$,
$g\upharpoonright N\in\Aut\left(N\right)$ and $g\upharpoonright N_{1}=f\upharpoonright N_{1}$.
Note that because of the coherence axiom of AECs $M_{0}\prec_{\mathcal{K}}N_{1}$.
Moreover, by Lemma $\ref{lem:ep}$ there is $N'\in\mathcal{K}^{<}\left(M\right)$
such that $N_{1}\prec_{\mathcal{K}}N'$ and $f':=f\upharpoonright N'\in\Aut\left(N'\right)$.
Let $i$ be the smallest index $<\kappa$ such that $N'\subseteq M_{i}$.
Since $N_{1}$ is an strong amalgamation base, there is $N_{2}\in\mathcal{K^{<}}\left(M\right)$
and $e_{2}\in\Aut\left(N_{2}\right)$ such that $M_{i},\alpha\left[N\right]\prec_{\mathcal{K}}N_{2}$,
$f'\leqslant e_{2}$, $\alpha\in\Aut_{N_{1}}\left(M\right)$ and $\alpha\circ g\circ\alpha^{-1}\leqslant e_{2}$.
It is clear that $\left(N_{2},e_{2}\right)\in E$ , then the construction
of $f$ guarantees that there is $j>i$ such that $\left(N_{2},e_{2}\right)$
embeds into $\left(M_{j},f_{j}\right)$ that fixes $M_{i}$. Hence,
and there exists $N^{*}\in\mathcal{K}^{<}\left(M\right)$ and $\alpha\in\Aut_{N_{1}}\left(M\right)$
such that $\alpha\left[N\right]=N^{*}$ and $f$ extends $\alpha\circ g\circ\alpha^{-1}\upharpoonright N^{*}$. 
\item Suppose $N,N'\in\mathcal{K^{<}}\left(M\right)$ and $g\in\Aut\left(M\right)$
are such that $N'\prec_{\mathcal{K}}N$, $g\upharpoonright N\in\Aut\left(N\right)$
, $g\upharpoonright N'=f\upharpoonright N'$ and $N'\cap M_{0}\neq M_{0}$.
Using Lemma \ref{lem:ep}, there is $N_{1}\in\mathcal{K^{<}}\left(M\right)$
such that $N\cup M_{0}\subseteq N_{1}$ and $f\upharpoonright N_{1}\in\Aut\left(N_{1}\right)$.
Again from the coherence axiom of AECs it follows that $N,M_{0}\prec_{\mathcal{K}}N_{1}$.
Since $\mathcal{K}^{<}\left(M\right)$ is a strong amalgamation class
and $\left(N,f\upharpoonright N\right)$ is a strong amalgamation
base, $f\upharpoonright N_{1}$ and $g\upharpoonright N'$ are compatible
over $N$. Let $\alpha\in\Aut_{N}\left(M\right)$, $N_{2}\in\mathcal{K}^{<}\left(M\right)$
and $h\in\Aut\left(N_{2}\right)$ such that $N_{1}\prec_{\mathcal{K}}N_{2}$
and $h$ extends $f\upharpoonright N_{1}\cup g^{\alpha}\upharpoonright N'$.
Note that $\alpha\left[N_{1}\right]$ (and hence $\alpha\left[N\right]$)
is strongly embedded in $N_{2}$ (again follows from coherence axiom
in AEC's). Now $\left(N_{2},h\right)$ is an element of $E$. Then
we can again reason as in Case 1.
\end{enumerate}
\end{proof}
\begin{lem}
Fix an index set $I$ with $\left|I\right|<\kappa$ and let $\mathcal{F}$
be the set of all $I$-sequences of generic automorphisms. Then $\mathcal{F}$
is a dense subset of $\Aut\left(M\right)^{I}$, with the product topology. 
\end{lem}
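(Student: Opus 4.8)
The plan is to reduce density in the product topology to the \emph{relativized} existence of generic sequences that is furnished by the construction in the proof of Lemma~\ref{lem:ex-gen}. Fix a point $\sigma=\left(\sigma_{i}:i\in I\right)\in\Aut\left(M\right)^{I}$ and a basic open neighborhood $U$ of $\sigma$. Since the stabilizers $\left\{ \Aut_{X}\left(M\right):X\in\mathcal{C}\right\}$ form a basis of open neighborhoods of the identity, and since a basic open set in the product topology constrains only finitely many coordinates, $U$ is determined by a finite set $F\subseteq I$ together with closed sets $X_{i}\in\mathcal{C}$ for $i\in F$, namely
\[
U=\left\{ \left(\tau_{i}:i\in I\right):\tau_{i}\restriction X_{i}=\sigma_{i}\restriction X_{i}\text{ for all }i\in F\right\}.
\]
So the goal is to produce a generic sequence $\gamma=\left(g_{i}:i\in I\right)\in\mathcal{F}$ with $g_{i}\restriction X_{i}=\sigma_{i}\restriction X_{i}$ for every $i\in F$.

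First I would collect all the relevant data inside a single small model. As $F$ is finite and each $X_{i}\in\mathcal{C}$ has cardinality $<\kappa$, the set $\bigcup_{i\in F}X_{i}$ has cardinality $<\kappa$. Running the closing-off argument of Lemma~\ref{lem:ep} (as in Corollary~\ref{cor:ep}) starting from a small model that contains $\bigcup_{i\in F}X_{i}$ and closing under the finitely many maps $\sigma_{i}^{\pm1}$ for $i\in F$, I obtain $N_{0}\in\mathcal{K}^{<}\left(M\right)$ with $\bigcup_{i\in F}X_{i}\subseteq N_{0}$ and $\sigma_{i}\restriction N_{0}\in\Aut\left(N_{0}\right)$ for every $i\in F$. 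I then define a sequence $\gamma^{0}=\left(g_{i}^{0}:i\in I\right)$ of automorphisms of $N_{0}$ by putting $g_{i}^{0}:=\sigma_{i}\restriction N_{0}$ for $i\in F$ and $g_{i}^{0}:=\mathrm{id}_{N_{0}}$ for $i\in I\setminus F$ (the values off $F$ are irrelevant to membership in $U$).

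Next I would feed the base pair $\left(N_{0},\gamma^{0}\right)$ into the tower construction from the proof of Lemma~\ref{lem:ex-gen}. Since $\mathcal{K}^{<}\left(M\right)$ is a strong amalgamation class, that construction, started from $\left(N_{0},\gamma^{0}\right)$, yields a generic sequence $\gamma=\left(g_{i}:i\in I\right)$ of automorphisms of $M$ extending $\gamma^{0}$, i.e. $g_{i}\restriction N_{0}=g_{i}^{0}$ for all $i\in I$. In particular, for $i\in F$ and $X_{i}\subseteq N_{0}$ we get $g_{i}\restriction X_{i}=\left(\sigma_{i}\restriction N_{0}\right)\restriction X_{i}=\sigma_{i}\restriction X_{i}$, so $\gamma\in\mathcal{F}\cap U$. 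As $\sigma$ and $U$ were arbitrary, this shows $\mathcal{F}$ is dense.

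The hard part is precisely the middle step: one must be sure that a generic sequence can be built \emph{extending the prescribed data} $\gamma^{0}$ on $N_{0}$, not merely that some generic sequence exists. This is exactly what the proof of Lemma~\ref{lem:ex-gen} delivers, since there the generic object is assembled as a union of an increasing chain of automorphisms of small models starting from an arbitrarily chosen base pair; by construction the resulting generic sequence restricts to that base on $N_{0}$. The only additional care required is to invoke the (notationally heavier, but structurally identical) sequence version of that construction for the full index set $I$, rather than the single-automorphism case written out explicitly there.
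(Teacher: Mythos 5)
Your proposal is correct and follows essentially the same route as the paper: the paper's proof (written out only for the case $\left|I\right|=1$, just as its proof of Lemma \ref{lem:ex-gen}) likewise takes the restriction data prescribed by the basic open set, sets it as the base pair, and runs the tower construction of Lemma \ref{lem:ex-gen} to obtain a generic object extending that base, which therefore lies in the given open set. Your write-up merely makes explicit the details the paper leaves implicit — the finite-support description of basic open sets in the product topology, the closing-off to a single small model $N_{0}\in\mathcal{K}^{<}\left(M\right)$ on which the finitely many constrained coordinates restrict to automorphisms, and the appeal to the sequence version of the construction — so it is the same argument, not a different one.
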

\begin{proof}
For the sake of readability, we write the proof again for the case
$\left|I\right|=1$. Let $\Aut_{A}\left(M\right)$ be a basic open
set with $A\in\mathcal{C}$. Let $g\in\Aut_{A}\left(M\right)$; then,
just as in the proof of Lemma \ref{lem:ex-gen} (choosing $f_{0}:=g\upharpoonright A$)
we can find a generic automorphism $f$ such that $f_{0}\leqslant f$.
Therefore $f\in\Aut_{A}\left(M\right)$ and $\mathcal{F}\cap\Aut_{A}\left(M\right)$
is non-empty.
\end{proof}
\begin{rem}
In Definitions \ref{def:comp}, \ref{def:gen} and \ref{Strong-Amalgamation-Base}
we considered the very natural case where the elements are from $\mathcal{K}^{<}\left(M\right)$
and it is enough for us to prove the results of next sections. However,
these notions can be considered for the elements from the bigger set
$\mathcal{C}$; the required definitions for elements of $\mathcal{C}$
have already been provided in Section \ref{sec:setting}.
\end{rem}

\subsection{Many generic sequences of automorphisms}

We now prove that there are many different generic sequences of automorphisms
\emph{relative to the size} of $\Aut\left(M\right)$, in the following
specific sense: 
\begin{itemize}
\item If $\left|\Aut\left(M\right)\right|\geq\kappa$, then for any family
$\left(M_{i}\right)_{i\in I}$ of ($\leq$$\kappa$)-many models in
$\mathcal{K}^{<}\left(M\right)$ where $\mathcal{K}^{<}\left(M\right)$
is a strong amalgamation base, we obtain a corresponding family $\left(h_{i}\right)_{i\in I}$
of automorphisms of $M$, each $h_{i}$ fixing pointwise $M_{i}$,
and such that the family $\left(h_{i}\right)_{i\in I}$ can be used
to build a generic sequence of automorphisms in $2^{\kappa}$-many
different ways (the technical details of this are the content of the
next lemma).
\item We therefore obtain $2^{\kappa}$-many generic sequences of automorphisms
of length $\kappa$. Moreover, this implies that if $\left|\Aut\left(M\right)\right|\geq\kappa$
then $\left|\Aut\left(M\right)\right|=2^{\kappa}$. Our approach in
this part has the general structure of Lascar-Shelah \cite[Lemma 9]{MR1204064}
but making explicit the use of strong amalgamation bases.
\item Of course, in the other extreme case when the model $M$ is rigid\footnote{For instance, in the case where the model $M$ is a cardinal and strong
elementary submodels of $M$ are just initial segments.}, all generic sequences of automorphisms must consist of the identity.
\end{itemize}
\begin{lem}
\label{lem:matrix} Suppose $\left(g_{i,j}:i\in I,j\in J\right)$
is a matrix of automorphisms of $M$ and $\left|I\right|=\left|J\right|\leq\kappa$.
Fix $\left(M_{i}:i\in I\right)$ a sequence of elements of $\mathcal{K^{<}}\left(M\right)$
where $\mathcal{K}^{<}\left(M\right)$ is a strong amalgamation base.
Then there exists $\left(h_{i}:i\in I\right)$ such that
\begin{enumerate}
\item $h_{i}\in\Aut_{M_{i}}\left(M\right)$ for all $i\in I$;
\item $\left(h_{i}\circ g_{i,\delta\left(i\right)}:i\in I\right)$ is a
generic sequence, for all injective functions $\delta:I\rightarrow J$.
\end{enumerate}
\end{lem}
\begin{proof}
Consider the following set 
\begin{eqnarray*}
\mathcal{Y} & = & \{\left(I_{0},N_{0},N_{1},\delta,\left(k_{i}:i\in I_{0}\right)\right):I_{0}\subseteq I,\left|I_{0}\right|<\kappa,\delta\mbox{ is an injective function from }I_{0}\mbox{ into }J\mbox{ with }\\
 &  & N_{0}\preccurlyeq N_{1}\in\mathcal{K}^{<}\left(M\right),k_{i}\in\Aut\left(N_{1}\right)\mbox{ and }k_{i}\upharpoonright N_{0}\in\Aut\left(N_{0}\right)\}.
\end{eqnarray*}
Note that $\left|\mathcal{Y}\right|=\kappa$. Fix an enumeration $\left(y_{\alpha}:\alpha<\kappa\right)$
for $\mathcal{Y}$. We define by induction on $\alpha<\kappa$ a family
$\left(M_{\alpha}^{i}:i\in I\right)$ of elements of $\mathcal{K}^{<}\left(M\right)$
and a family $\left(h_{\alpha}^{i}:i\in I\right)$, where $h_{\alpha}^{i}\in\Aut\left(M_{\alpha}^{i}\right)$
such that
\begin{enumerate}
\item $M_{0}^{i}=M_{i}$ and $h_{0}^{i}=id_{M_{i}}$ for all $i\in I$;
\item The functions $\left(\alpha\mapsto M_{\alpha}^{i}\right)$ and $\left(\alpha\mapsto h_{\alpha}^{i}\right)$
are increasing and continuous, for all $i\in I$;
\item $\bigcup_{\alpha<\kappa}M_{\alpha}^{i}=M$ and $h_{i}:=\bigcup_{\alpha<\kappa}h_{\alpha}^{i}\in\Aut\left(M\right)$
for each $i\in I$;
\item $\left(h_{i}\circ g_{i,\delta\left(i\right)}:i\in I\right)$ is a
generic sequence for all injective functions $\delta$ from $I$ to
$J$.
\end{enumerate}
Assume $y_{\alpha}=\left(I_{0},N_{0},N_{1},\delta,\left(k_{i}:i\in I_{0}\right)\right)\in\mathcal{Y}$.
Using Lemma $\ref{lem:ep}$ and Corollary $\ref{cor:ext}$ one can
show that there is a model $M'\in\mathcal{K}^{<}\left(M\right)$ such
that $N_{0}\prec_{\mathcal{K}}M'$
\begin{itemize}
\item $M_{\alpha}^{i}\prec_{\mathcal{K}}M'$ for all $i\in I_{0}$;
\item $g_{i,\delta\left(i\right)}\upharpoonright M'\in\Aut\left(M'\right)$
for all $i\in I_{0}$;
\item $h_{\alpha}^{i}$ extends to an automorphism $m_{\alpha}^{i}\text{\ensuremath{\in\Aut\left(M'\right)}}$
for each $i\in I_{0}$. 
\end{itemize}
If $\left(m_{\alpha}^{i}\circ g_{i,\delta\left(i\right)}:i\in I_{0}\right)$
and $\left(k_{i}:i\in I_{0}\right)$ are not compatible over any $\prec_{\mathcal{K}}$-submodel
of $N_{0}$ (i.e. there is no model and no $\prec_{\mathcal{K}}$-embedding
to that model such that we can amalgamate the automorphisms over any
$\prec_{\mathcal{K}}$-submodel of $N_{0}$), then define $M_{\alpha+1}^{i}:=M'$
and $h_{\alpha+1}^{i}:=m_{\alpha}^{i}$ for all $i\in I_{0}$. Define
$M_{\alpha+1}^{i}=M_{\alpha}^{i}$ and $h_{\alpha+1}^{i}=h_{\alpha}^{i}$
for $i\notin I_{0}$. 

Suppose, without loss of generality, $\left(m_{\alpha}^{i}\circ g_{i,\delta\left(i\right)}:i\in I_{0}\right)$
and $\left(k_{i}:i\in I_{0}\right)$ are compatible over $N_{0}$.
Since $\mathcal{K}^{<}\left(M\right)$ is a strong amalgamation class
we can find $M_{\alpha+1}^{i}\succ_{\mathcal{K}}M'$ and $h_{\alpha+1}^{i}\in\Aut\left(M_{\alpha+1}^{i}\right)$
such that there exists an $\prec_{\mathcal{K}}$-elementary map $f$
from $N_{1}$ into $M_{\alpha+1}^{i}$ which extends the identity
on $N_{0}$ in such a way that $h_{\alpha+1}^{i}\circ g_{i,\delta\left(i\right)}\upharpoonright M_{\alpha+1}^{i}$
extends $f\circ k_{i}\circ f^{-1}$. If $i\notin I_{0}$, then define
$M_{\alpha+1}^{i}=M_{\alpha}^{i}$ and $h_{\alpha+1}^{i}=h_{\alpha}^{i}$. 
\end{proof}
Now for the case where $\left|\Aut\left(M\right)\right|\geq\kappa$
we choose $\left(g_{i,j}:i\in I,j\in J\right)$ to be a $\kappa\times\kappa$-matrix
of distinct automorphisms of $M$. Then we get $2^{\kappa}$-many
generic sequences of automorphisms of length $\kappa$. 

\section{Proof of SIP for strong amalgamation classes}

\label{sec:main-thm}

In this section we prove our main theorem.
\begin{thm}
\label{thm:main} Let $M$ be a homogeneous model in an abstract elementary
class $\left(K,\preccurlyeq_{\mathcal{K}}\right)$ such that \textup{$\left|M\right|=\kappa>\mbox{LS}\left(K\right)$
}\textup{\emph{and}}\textup{ $\kappa^{<\kappa}=\kappa$. }\textup{\emph{Furthermore,
assume that }}\emph{$\mathcal{K}^{<}\left(M\right)$ }is a\emph{ }strong
amalgamation clas\textup{\emph{s. }}Consider the group\textup{\emph{
$Aut(M)$}}\textup{ }with the topology given by $\mathfrak{T}^{\mbox{cl}}$,
and let $H\leqslant\Aut\left(M\right)$ be such that $\left[\Aut\left(M\right):H\right]\leq\kappa$.
\uline{Then}, $H$ is an open subgroup of $Aut(M)$; i.e., there
exists $A\in\mathcal{K}^{<}\left(M\right)$ such that $\Aut_{A}\left(M\right)\leqslant H$.

In other words, $\left(\Aut\left(M\right),\mathfrak{T}^{\mbox{cl}}\right)$
has the small index property. 
\end{thm}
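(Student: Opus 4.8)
The plan is to adapt the Lascar--Shelah argument \cite{MR1204064} to the homogeneous AEC setting, exploiting the fact established in the previous section that there are $2^{\kappa}$-many generic sequences of automorphisms whenever $\left|\Aut\left(M\right)\right|\geq\kappa$. The key idea is that genericity provides a robust notion of ``sameness up to conjugacy'': by the Lemma preceding Definition \ref{Strong-Amalgamation-Base}, any two generic sequences agreeing on a common small model $M_{0}$ are conjugate by an element of $\Aut_{M_{0}}\left(M\right)$. So the task is to cut down a subgroup $H$ of index $\leq\kappa$ to something that must already contain a full pointwise stabilizer.

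\emph{First}, I would suppose toward a contradiction that $H$ has index $\leq\kappa$ but is \emph{not} open, i.e. $\Aut_{A}\left(M\right)\not\leqslant H$ for every $A\in\mathcal{K}^{<}\left(M\right)$. Since $[\Aut(M):H]\leq\kappa$, the coset space $\Aut(M)/H$ is indexed by some set of size $\leq\kappa$. \emph{Second}, using Lemma \ref{lem:matrix} I would fix a $\kappa\times\kappa$-matrix $\left(g_{i,j}\right)$ of automorphisms together with a sequence $\left(M_{i}:i\in I\right)$ cofinal in $\mathcal{K}^{<}\left(M\right)$, producing automorphisms $\left(h_{i}:i\in I\right)$ with $h_{i}\in\Aut_{M_{i}}\left(M\right)$ such that for every injective $\delta:I\to J$ the sequence $\left(h_{i}\circ g_{i,\delta(i)}:i\in I\right)$ is generic. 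This yields a large family of generic sequences parametrized by the $2^{\kappa}$ choices of $\delta$. \emph{Third}, I would use a counting/pigeonhole argument against the $\leq\kappa$ many cosets of $H$: since there are $2^{\kappa}>\kappa$ generic sequences arising from distinct $\delta$'s, infinitely (indeed $2^{\kappa}$-many) of the associated automorphisms must land in the same coset of $H$. Comparing two such, their ``quotient'' lands in $H$, and by arranging the matrix so that these quotients realize every element of a suitable pointwise stabilizer, I force $\Aut_{A}\left(M\right)\leqslant H$ for some small $A$ --- contradicting the assumption.

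The crucial mechanism is \emph{conjugacy transfer}: if $\gamma$ and $\gamma^{\alpha}$ are both generic and land in cosets $\sigma_{1}H$, $\sigma_{2}H$, then the uniqueness-up-to-conjugacy lemma lets me replace one generic sequence by another while tracking the coset, so that products of the form $h_{i}\circ g_{i,\delta(i)}\circ\left(h_{i}\circ g_{i,\delta'(i)}\right)^{-1}$ can be made to exhaust $\Aut_{A}\left(M\right)$ modulo $H$. Here I would lean on the homogeneity of $M$ (to extend small isomorphisms to automorphisms) and on strong amalgamation (to guarantee the relevant compatibilities and hence genericity at each stage).

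\emph{The main obstacle} I expect is the third step: organizing the counting so that membership of the ``quotients'' in $H$ genuinely produces a \emph{single} small model $A$ whose full stabilizer is trapped inside $H$, rather than merely an abundance of elements of $H$ scattered across different stabilizers. In the first-order saturated case of \cite{MR1204064} this is handled by strong homogeneity and the stationarity of types; in our setting the substitute must be the strong amalgamation class hypothesis together with the genericity machinery, and the delicate point will be ensuring that the $2^{\kappa}$ generic sequences are sufficiently ``independent'' --- in the sense that their pairwise quotients cover a neighborhood basis element $\Aut_{A}\left(M\right)$ --- so that the pigeonhole collapse onto $\leq\kappa$ cosets is contradictory. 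Making the choice of the matrix $\left(g_{i,j}\right)$ and the models $\left(M_{i}\right)$ precise enough to control this covering is where the real work lies.
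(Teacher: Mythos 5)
Your overall frame (run a cardinality argument against the index bound, using Lemma \ref{lem:matrix} and the conjugacy lemma for generic sequences agreeing on a small model) is the right one, and your first two steps match the paper's. But your third step --- pigeonhole the $2^{\kappa}$ generic automorphisms into the $\leq\kappa$ cosets of $H$, and then arrange that the quotients $f_{1}^{-1}\circ f_{2}\in H$ of same-coset pairs cover a full stabilizer $\Aut_{A}\left(M\right)$ --- is a genuine gap, and it points the contradiction in the wrong direction. The obstacle you yourself flag is fatal as stated: pigeonhole only tells you that \emph{some} subfamily of size $2^{\kappa}$ lands in a single coset, with no control over which one, so you would need the covering property (``pairwise quotients exhaust some $\Aut_{A}\left(M\right)$'') to hold uniformly over every large subfamily, with a uniform choice of $A$. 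Nothing in Lemma \ref{lem:matrix} or in the strong amalgamation hypothesis provides this. Indeed, the mechanism ``quotients of a large set of group elements cover a neighborhood of the identity'' is exactly the Steinhaus/Pettis-style argument that proves SIP for Polish groups via Baire category, and its unavailability at uncountable $\kappa$ is the very reason Lascar--Shelah, and this paper, argue differently.

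The paper runs the count the other way: it produces $2^{\kappa}$ automorphisms lying in \emph{pairwise distinct} cosets of $H$, contradicting $\left[\Aut\left(M\right):H\right]\leq\kappa$. Two ingredients are missing from your sketch. First (Proposition \ref{prop:many-aut}), the non-openness of $H$ is used to build a single generic sequence $\gamma$ such that $\kappa$-many of its terms lie in $H$, while for every $M_{0}\in\mathcal{K}^{<}\left(M\right)$ and every $h\in\Aut\left(M_{0}\right)$, $\kappa$-many terms extend $h$ and avoid $H$; this is precisely where your idea of choosing the injection $\delta$ according to coset membership is actually deployed. Second, a binary tree construction of height $\kappa$: at each node $s$ one picks from $\gamma$ a pair $h_{s,0}\in H$, $h_{s,1}\notin H$ agreeing on the current small model $M_{s,0}$, together with a single element $k_{s,0}=k_{s,1}$ of $\gamma$ conjugate to both over $M_{s,0}$; the conjugacy lemma you correctly identified is what allows the partial conjugators $g_{s}$ to be extended coherently along the tree (after extending $g_{s}$ arbitrarily to $g'\in\Aut\left(M\right)$, the generic families $\left(g'\circ h_{t}\circ g'^{-1}\right)_{t\leqslant s}$ and $\left(k_{t}\right)_{t\leqslant s}$ agree on $M_{s}$, hence are conjugate over it). The $2^{\kappa}$ branches then yield automorphisms $g_{\sigma}$ satisfying, at the splitting node $s$ of two branches $\sigma,\tau$, the identity $g_{\sigma}\circ h_{s,0}\circ g_{\sigma}^{-1}=k_{s,0}=k_{s,1}=g_{\tau}\circ h_{s,1}\circ g_{\tau}^{-1}$; if $g_{\sigma}^{-1}\circ g_{\tau}$ were in $H$ this would force $h_{s,1}\in H$, a contradiction, so the branches represent $2^{\kappa}$ distinct cosets. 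This branching device, not a covering argument, is the missing core. (The paper also first disposes of the degenerate case $\left|\Aut\left(M\right)\right|<2^{\kappa}$, where the identity is isolated and every subgroup is open; that reduction is minor but your sketch omits it too.)
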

We need only deal with the case $\Aut\left(M\right)$ is \emph{rich}\footnote{\emph{With an argument similar to the first order countable case,
one can show that if the identity is not isolated, then $\Aut\left(M\right)$
is a perfect complete topological space.}}, that is $\left|\text{\ensuremath{\Aut\left(M\right)}}\right|=2^{\kappa}$
(otherwise, the identity would be isolated and therefore the automorphism
group would automatically satisfy the theorem, as in that case all
subgroups would be open). 

Suppose $H$ is a subgroup of $\Aut\left(M\right)$ with small index
(i.e. not bigger that $\kappa$). Toward a contradiction suppose $H$
is not open.
\begin{prop}
\label{prop:many-aut} (Similar to Proposition 10 in \cite{MR1204064})
There exists a generic sequence $\gamma=\left(g_{i}:i\in I\right)$
such that
\begin{enumerate}
\item the set $\left\{ i\in I:g_{i}\upharpoonright M_{0}=h\mbox{ and }g_{i}\notin H\right\} $
has cardinality $\kappa$ for all $M_{0}\in\mathcal{K^{<}}\left(M\right)$
and $h\in\Aut\left(M_{0}\right)$;
\item the set $\left\{ i\in I:g_{i}\in H\right\} $ has cardinality $\kappa$.
\end{enumerate}
\end{prop}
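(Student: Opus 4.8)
The plan is to build the generic sequence $\gamma=(g_i : i\in I)$ by assembling two kinds of ingredients and then invoking the machinery of Lemma \ref{lem:matrix}. Since $\Aut(M)$ is rich, $|\Aut(M)|=2^{\kappa}$, while the coset space $\Aut(M)/H$ has size at most $\kappa$; this gap is the engine of the whole argument. First I would fix, for every pair $(M_0,h)$ with $M_0\in\mathcal{K}^{<}(M)$ and $h\in\Aut(M_0)$, a supply of automorphisms of $M$ extending $h$. There are at most $\kappa$ such pairs (since $\kappa^{<\kappa}=\kappa$ bounds the number of small models and their automorphisms), so index them by a set of size $\kappa$. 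For each pair, I claim there must be $2^{\kappa}$-many, hence in particular $\kappa$-many, distinct elements of $\mathcal{O}_h^M$ that lie \emph{outside} $H$: indeed, homogeneity gives $\mathcal{O}_h^M\neq\emptyset$, and the argument behind Lemma \ref{lem:matrix} (together with richness) produces $2^{\kappa}$ many automorphisms agreeing with $h$ on $M_0$; only $\leq\kappa$ of the corresponding cosets can meet $H$, so $2^{\kappa}$ of them avoid $H$. This produces the raw material for clause (1).

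Next I would organize these automorphisms into the columns of a matrix and feed them into Lemma \ref{lem:matrix}. Concretely, I would choose an index set $I$ of size $\kappa$ partitioned so that for each pair $(M_0,h)$ a block of $\kappa$-many coordinates is reserved; on the coordinates of that block I place automorphisms $g_{i,j}$ extending $h$ and lying outside $H$ (for clause (1)), and I reserve a further block of $\kappa$-many coordinates carrying automorphisms \emph{inside} $H$ (for clause (2); since $H$ itself has size $2^{\kappa}>\kappa$, there is no shortage). Applying Lemma \ref{lem:matrix} with $|I|=|J|=\kappa$ and with $(M_i)$ the sequence of the relevant small models yields automorphisms $(h_i)_{i\in I}$ with $h_i\in\Aut_{M_i}(M)$ such that $(h_i\circ g_{i,\delta(i)} : i\in I)$ is generic for every injective $\delta:I\to J$. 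The crucial point is that $h_i$ fixes $M_i$ pointwise, so multiplying by $h_i$ does not disturb the value of $g_{i,\delta(i)}$ on the small model $M_0\subseteq M_i$; thus the ``$g_i\upharpoonright M_0=h$'' conditions are preserved.

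The one genuinely delicate step is keeping track of membership in $H$ after forming $h_i\circ g_{i,\delta(i)}$. I would handle clause (1) by noting that whether $h_i\circ g_{i,\delta(i)}$ lies in $H$ depends only on the coset $h_i H$, and the $h_i$ are produced by Lemma \ref{lem:matrix} independently of our freedom in choosing $\delta$; so I would fix a single injective $\delta$ and then re-examine the construction to force the final products off $H$ on the designated coordinates. More robustly, since each targeted column offers $2^{\kappa}$-many choices of $g_{i,\delta(i)}\notin H$ spread across $2^{\kappa}$ distinct cosets, and $h_i H$ is a \emph{single} coset, at most one choice per column could be pulled back into $H$ by left-multiplication by $h_i$; discarding that one choice leaves $\kappa$-many coordinates in each block whose product is outside $H$, giving (1), while the reserved $H$-block gives (2) by the same bookkeeping with the roles reversed. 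The anticipated obstacle is precisely this interaction between the genericity-producing modification $h_i$ and the coset-membership condition; the resolution is the cardinal arithmetic $2^{\kappa}>\kappa$, which guarantees enough room in each column to absorb the single bad coset.
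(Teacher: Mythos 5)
Your overall architecture is indeed the paper's: build a matrix $\left(g_{i,j}\right)$ of automorphisms whose rows prescribe restrictions to small models, apply Lemma \ref{lem:matrix} to obtain correctors $h_{i}\in\Aut_{M_{i}}\left(M\right)$, and only afterwards choose the injection $\delta$ so as to control membership of $h_{i}\circ g_{i,\delta\left(i\right)}$ in $H$. But the ``delicate step'' you flag is exactly where the proposal breaks, in both clauses. For clause (1): since $\left[\Aut\left(M\right):H\right]\leq\kappa$, there are at most $\kappa$ cosets of $H$ altogether, so your entries cannot be ``spread across $2^{\kappa}$ distinct cosets,'' and the inference ``at most one choice per column could be pulled back into $H$'' is false; what is true is that at most one \emph{coset's worth} of choices --- possibly all of them --- is pulled into $H$. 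Requiring the entries merely to lie outside $H$ does not prevent this: it can happen that $\mathcal{O}_{h}^{M}\setminus H$ is contained in a \emph{single} coset $cH$ (for instance, if $\Aut_{M_{0}}\left(M\right)\cap H$ has index $2$ in $\Aut_{M_{0}}\left(M\right)$, then $\mathcal{O}_{h}^{M}\setminus H$ is one left translate of $\Aut_{M_{0}}\left(M\right)\cap H$, hence lies in one coset of $H$), and if Lemma \ref{lem:matrix} happens to hand you $h_{i}$ with $h_{i}^{-1}H=cH$, then \emph{every} product $h_{i}\circ g_{i,\delta\left(i\right)}$ lands in $H$ and that row is useless. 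Your fallback --- ``re-examine the construction to force the final products off $H$'' --- is circular, since the $h_{i}$ are produced by Lemma \ref{lem:matrix} from the matrix itself. The invariant a row actually needs is not ``all entries outside $H$'' but ``the entries meet at least two cosets of $H$''; that is available precisely because $H$ is \emph{not open} (the nonempty open set $\mathcal{O}_{f_{i}}^{M}$ cannot be contained in a single coset of a non-open subgroup), a hypothesis your argument never invokes at this point. This is the paper's condition on the rows indexed by $I_{0}$.

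Clause (2) fails outright in your setup: the reserved block has all entries inside $H$, i.e.\ confined to the single coset $H$, so the products $h_{i}\circ g_{i,\delta\left(i\right)}$ lie in $h_{i}H$, which equals $H$ only when $h_{i}\in H$ --- and Lemma \ref{lem:matrix} gives no control whatsoever over which coset $h_{i}$ lies in (only that it fixes $M_{i}$ pointwise). If $h_{i}\notin H$, no choice of $\delta\left(i\right)$ within your block produces an element of $H$, so clause (2) can fail on every reserved coordinate; ``the same bookkeeping with the roles reversed'' is exactly backwards. To guarantee landing \emph{in} $H$ after left-composition with an uncontrolled $h_{i}$, the row must meet \emph{every} coset of $H$ --- possible because the index is $\leq\kappa=\left|J\right|$ --- so that whichever coset $h_{i}^{-1}H$ turns out to be, some entry hits it; this is the paper's condition on the rows indexed by $I_{1}$. (A secondary remark: your pigeonhole for ``$2^{\kappa}$ many elements of $\mathcal{O}_{h}^{M}$ avoid $H$'' is also incomplete --- only one coset equals $H$, but nothing in that count stops $2^{\kappa}$ of the extensions from lying in it; the claim is true, yet it needs a translation argument plus non-openness, and the correct proof never needs it at all.) In short, the missing idea is that the quantity preserved under the uncontrolled left-multiplication by $h_{i}$ is the \emph{spread of a row across cosets of $H$} (at least two cosets for (1), all cosets for (2)), not the location of individual entries relative to $H$.
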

\begin{proof}
Consider the following set $\mathcal{X}=\left\{ \left(M_{0},f\right):M_{0}\in\mathcal{K^{<}}\left(M\right)\mbox{ and }f\in\Aut\left(M_{0}\right)\right\} $.
It is clear that $\left|\mathcal{X}\right|=\kappa$. 
\begin{itemize}
\item Consider $I_{0}$ of cardinality $\kappa$ and a sequence $\left(\left(M_{i},f_{i}\right):i\in I_{0}\right)$
of elements of $\mathcal{X}$ such that the set $\left\{ i\in I_{0}:\left(M_{i},f_{i}\right)=\left(M_{0},f\right)\right\} $
has cardinality $\kappa$ for all $\left(M_{0},f\right)\in\mathcal{X}$.
\item Let $I_{1}$ be a set of cardinality $\kappa$ disjoint from $I_{0}$
and let $I=I_{0}\cup I_{1}$.
\item Finally, let $J$ be any set of cardinality $\kappa$.
\end{itemize}
For each $i\in I$ and $j\in J$ define $g_{i,j}\in\Aut\left(M\right)$
such that the following hold:
\begin{enumerate}
\item $g_{i,j}\upharpoonright M_{i}=f_{i}$ for all $i\in I_{0}$. Moreover,
the set $\left\{ g_{i,j}:j\in J\right\} $ where $i\in I_{0}$ meets
at least two classes modulo $H$. This is always possible since we
assumed $H$ is not open (and hence none of its classes contain a
non-empty open set).
\item The set $\left\{ g_{i,j}\in J\right\} $ meets all classes modulo
$H$ if $i\in I_{1}$. This is possible because the index of $H$
in $\Aut\left(M\right)$ is small (i.e. not bigger that $\kappa$).
\end{enumerate}
Now let $\left(M_{i}:i\in I\right)$ a sequence of elements of $\mathcal{K^{<}}\left(M\right)$
where $M_{i}$ is arbitrary for $i\in I_{1}$. By Lemma \ref{lem:matrix}
there is a family $\left(h_{i}:i\in I\right)$ such that satisfying
conditions (1) and (2) of this lemma. Then choose a bijective function
$\delta:I\rightarrow J$ such that:
\begin{itemize}
\item $g_{i,\delta\left(i\right)}$ is not in the class of $h_{i}^{-1}$
for $i\in I_{0}$ (i.e. $g_{i,\delta\left(i\right)}\notin h_{i}^{-1}H$
guaranteed by Condition (1) above);
\item $g_{i,\delta\left(i\right)}$ is in the class of $h_{i}^{-1}$ for
$i\in I_{1}$ (i.e. $g_{i,\delta\left(i\right)}\in h_{i}^{-1}H$ guaranteed
by Condition (2) above).
\end{itemize}
Then the sequence $\gamma:=\left(h_{i}\circ g_{i,\delta\left(i\right)}:i\in I\right)$
is a generic sequence and satisfies the requirement.
\end{proof}

\subsection{Construction of the tree}

Let $H$ be as before: a subgroup of small index and we assume, toward
a contradiction, that $H$ is not open. Let $\mathcal{S}=2^{<\kappa}$
be the set of sequences of $0$ and $1$ of length less than $\kappa$,
and $\mathcal{S}^{*}=\left\{ s\in\mathcal{S}:\mbox{ the length of \ensuremath{s} is successor}\right\} $.
Let $\gamma$ be the generic sequence that is obtained from Proposition
\ref{prop:many-aut}. Fix an enumeration $\left(a_{\alpha}:\alpha<\kappa\right)$
of elements of $M$. 

We construct by induction on $s\in\mathcal{S}$, a model $M_{s}\in\mathcal{K}^{<}\left(M\right)$,
an automorphism $g_{s}\in\Aut\left(M_{s}\right)$, and if $s\in\mathcal{S}^{*}$,
automorphisms $h_{s}$ and $k_{s}$ in $\Aut_{\left\{ M_{s}\right\} }\left(M\right)$
in such a way that the following conditions are satisfied:
\begin{enumerate}
\item $h_{s,0}\in H$ and $h_{s,1}\notin H$ for all $s\in\mathcal{S}^{*}$;
\item $k_{s,0}=k_{s,1}$ for all $s\in\mathcal{S}^{*}$;
\item $h_{t}\left[M_{s}\right]=M_{s}$ (i.e. $h_{t}\in\Aut_{\{M_{s}\}}\left(M\right)$)
for $s\in\mathcal{S}$ and all $t\in\mathcal{S}^{*}$ with $t\leqslant s$;
\item $g_{s}\circ\left(h_{t}\upharpoonright M_{s}\right)\circ g_{s}^{-1}=k_{t}\upharpoonright M_{s}$
for $s\in\mathcal{S}$ and all $t\in\mathcal{S}^{*}$ with $t\leqslant s$;
\item $a_{\beta}\in M_{s}$ for $s\in\mathcal{S}$ and $\beta<\mbox{length}\left(s\right)$;
\item $\left(h_{t}:t\leqslant s,t\in\mathcal{S}^{*}\right)$ and $\left(k_{t}:t\leqslant s,t\in\mathcal{S}^{*}\right)$
are sequences elements from $\gamma$, for $s\in\mathcal{S}$ (and
they are generic as well).
\end{enumerate}
For $s=\emptyset$, define $M_{s}$ to be an arbitrary element of
$\mathcal{K}^{<}\left(M\right)$ and $g_{s}\in\mbox{id}\left(M_{s}\right)$.
For limit step there is no problem. Suppose everything has been defined
up to step $s$. Write $\mathcal{F}$ for the set of all the automorphisms
of the generic sequence $\gamma$. First choose $h_{s,0}\in\mathcal{F}\cap H$
not in $\left\{ h_{t}:t\leqslant s,t\in\mathcal{S}^{*}\right\} $.
Extend $g_{s}$ to $g\in\Aut\left(M\right)$ such that $g\circ h_{t}\circ g^{-1}=k_{t}$
for all $t\in\mathcal{S}^{*}$ with $t\leqslant s$ in the following
manner: First extend $g_{s}$ to some $g'\in\Aut\left(M\right)$.
The two families $\left\{ g'\circ h_{t}\circ g'^{-1}:t\leqslant s,t\in\mathcal{S}^{*}\right\} $
and $\left\{ k_{t}:t\leqslant s,t\in\mathcal{S}^{*}\right\} $ are
generic and they agree on $M_{s}$. Hence, we can find $g''\in\Aut_{M_{s}}\left(M\right)$
such that $k_{t}=g''\circ g'\circ h_{t}\circ g'^{-1}\circ g''^{-1}$
for all $t\in\mathcal{S}^{*}$ with $t\leqslant s$. Then let $g=g''\circ g'$. 

Using Lemma Corollary $\ref{cor:ext}$ we can find $M_{s,0}\in\text{\ensuremath{\mathcal{K}}}^{<}\left(M\right)$
in such a way that:
\begin{enumerate}
\item $M_{s,0}$ contains $M_{s}$ and $a_{\alpha}$, where $\alpha=\mbox{length}\left(s\right)$;
\item $h_{t}\left[M_{s,0}\right]=M_{s,0}$ for all $t\leqslant\left(s,0\right)$
and $g\left[M_{s,0}\right]=M_{s,0}$.
\end{enumerate}
Set $M_{s,1}=M_{s,0}$, $g_{s,0}=g_{s,1}=g\upharpoonright M_{s,0}$
and $h_{s,1}$ an element of $\mathcal{F}$ extending $h_{s,0}\upharpoonright M_{s,0}$,
not in $H$ and not in $\left\{ h_{t}:t\leqslant s,t\in\mathcal{S}^{*}\right\} $,
and $k_{s,0}=k_{s,1}$ an element of $\mathcal{F}$ extending $g_{s,0}\circ\left(h_{s,0}\upharpoonright M_{s,0}\right)\circ g_{s,0}^{-1}$
not in $\left\{ k_{t}:t\leqslant s,t\in\mathcal{S}^{*}\right\} $. 

For each $\sigma\in2^{\kappa}$, let $g_{\sigma}=\bigcup_{\delta<\sigma}g_{\delta}$.
Then $g_{\sigma}\in\Aut\left(M\right)$ and moreover for all $t<\sigma$
and $t\in\mathcal{S}^{*}$, $g_{\sigma}\circ h_{t}\circ g_{\sigma}^{-1}=k_{t}$.
Assume $\tau$ and $\sigma$ are two distinct elements of $2^{\kappa}$;
let $s$ be their largest common initial segment and assume, with
out loss of generality, that $\left(s,0\right)<\sigma$ and $\left(s,1\right)<\tau$.
Then $g_{\sigma}\circ h_{s,0}\circ g_{\sigma}^{-1}=k_{s,0}=k_{s,1}=g_{\tau}\circ h_{s,1}\circ g_{\tau}^{-1}$;
thus $h_{s,0}=g_{\sigma}^{-1}\circ g_{\tau}\circ h_{s,1}\circ g_{\tau}^{-1}\circ g_{\sigma}$.
Since $h_{s,0}\in H$ and $h_{s,1}\notin H$, $g_{\sigma}^{-1}\circ g_{\tau}\notin H$
hence the index of $H$ in $\Aut\left(M\right)$ is $2^{\kappa}$;
a contradiction.

\section{The SIP for the Zilber field, covers and modular invariants}

This section consists of applications of Theorem \ref{thm:main}.
We first provide a general setting (\emph{quasiminimal pregeometry
classes}) that both satisfies enough conditions for the theorem to
apply and includes several interesting cases.

\subsection{The SIP and quasiminimal pregeometry classes\label{subsec:The-SIP-and}}

In order to construct an algebraically closed field with the \emph{Schanuel
property} categorical in all uncountable cardinals, Zilber introduced
the notion of ``quasiminimal excellent'' classes (see \cite{MR2102856})
- a combination of Shelah's excellent classes with a weaker variant
of strong minimality. Later, the concept was studied in isolation
and two major simplifications were brought about: first of all, a
notion of quasiminimal classes is enough to imply excellence (see
\cite{BHHKK}) and a similar notion of quasiminimal classes directly
implies categoricity (see \cite{LH}). In this chapter we prove, as
an application of our Theorem $\ref{thm:main}$ that the SIP for homogeneous
models holds in the quasiminimal classes setting. Then, we conclude
that the automorphism group of Zilber field has the small index property.

We use the setting from \cite{LH}. For the sake of completeness (and
clarification) we restate the axioms here:
\begin{defn}
Let $\mathfrak{L}$ be a language. A\emph{ quasiminimal pregeometry}
\emph{class} $\mathcal{Q}$ is a class of pairs $\left\langle H,\mbox{cl}_{H}\right\rangle $
where $H$ is an $\mathfrak{L}$-structure and $\mbox{cl}_{H}$ is
a pregeometry operator on $H$ such that the following conditions
hold:

\begin{enumerate}
\item \textbf{Closure under isomorphisms:} If $\left\langle H,\mbox{cl}_{H}\right\rangle \in\mathcal{Q}$
and $H'$ is an $\mathfrak{L}$-structure and $f:H\rightarrow H'$
is an isomorphism then $\left\langle H',\mbox{cl}_{H'}\right\rangle \in\mathcal{Q}$
where $\mbox{cl}_{H'}:=f\left(\mbox{cl}_{H}\left(f^{-1}\left(X'\right)\right)\right)$
for $X'\subseteq H'$.
\item \textbf{Quantifier free theory:} The empty function is a partial embedding
between any two structures of the class $\mathcal{Q}$.
\item \textbf{Countable closure: }For each $\left\langle H,\mbox{cl}_{H}\right\rangle \in\mathcal{Q}$,
the closure of any finite set is countable.
\item \textbf{Relativization: }If $\left\langle H,\mbox{cl}_{H}\right\rangle \in\mathcal{Q}$
and $X\subseteq H$, then $\left\langle \mbox{cl}_{H}\left(X\right),\mbox{cl}_{H}\upharpoonright\mbox{cl}_{H}\left(X\right)\right\rangle \in\mathcal{Q}$.
\item \textbf{Closure coherence: }If $\left\langle H,\mbox{cl}_{H}\right\rangle ,\left\langle H',\mbox{cl}_{H'}\right\rangle \in\mathcal{Q}$,
$X\subseteq H$, $y\in H$ and $f:H\rightarrow H'$ is a partial embedding
defined on $X\cup\left\{ y\right\} $, then $y\in\mbox{cl}_{H}\left(X\right)$
if and only if $f\left(y\right)\in\mbox{cl}_{H'}\left(f\left(X\right)\right)$.
\item \textbf{Homogeneity over countable models / Uniqueness of generic
types:} Let $\left\langle H,\mbox{cl}_{H}\right\rangle ,\left\langle H',\mbox{cl}_{H'}\right\rangle \in\mathcal{Q}$
and suppose $C\subseteq H,C'\subseteq H'$ are countable closed sets
and $g:C\rightarrow C'$ is an isomorphism:

\begin{enumerate}
\item If $x\in H\backslash C$ and $x'\in H'\backslash C'$, then $g\cup\left\{ \left(x,x'\right)\right\} $
is a partial embedding.
\item If $g\cup f:H\rightharpoonup H'$ is a partial embedding such that
$X=dom\left(f\right)$ is finite and $y\in\mbox{cl}_{H}\left(X\cup C\right)$,
then there is $y'\in H'$ such that $g\cup f\cup\left\{ \left(y,y'\right)\right\} $
is a partial embedding.
\end{enumerate}
\end{enumerate}
Given a structure $H\in\mathcal{Q}$ and a substructure $G\subseteq H$
also in $\mathcal{Q}$, denote $G\preccurlyeq H$ when $G$ is closed
in $H$.
\end{defn}
\begin{prop}
\label{prop:clo} (Proposition 4 in \cite{LH}) Let $H,H'\in\mathcal{Q}$
such that $H\preccurlyeq H'$ and $X\subseteq H$. Then $\mbox{cl}_{H}\left(X\right)=\mbox{cl}_{H'}\left(X\right)$.
\end{prop}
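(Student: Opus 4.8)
The plan is to establish the two inclusions separately: first use \emph{closure coherence} to identify $\mbox{cl}_{H}\left(X\right)$ as the trace of $\mbox{cl}_{H'}\left(X\right)$ on $H$, and then use the fact that $H$ is closed in $H'$ to conclude that this trace is all of $\mbox{cl}_{H'}\left(X\right)$.

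First I would observe that, since $H\preccurlyeq H'$, $H$ is in particular an $\mathfrak{L}$-substructure of $H'$, so the inclusion map $\iota:H\hookrightarrow H'$ preserves and reflects all atomic (hence all quantifier-free) formulas. Thus every restriction $\iota\upharpoonright S$ with $S\subseteq H$ is a partial embedding from $H$ into $H'$. Next, fixing an arbitrary $y\in H$, I would apply the \emph{closure coherence} axiom to the partial embedding $f:=\iota\upharpoonright\left(X\cup\left\{ y\right\} \right)$, which is defined exactly on $X\cup\left\{ y\right\} $. Since $f$ is the identity on its domain, $f\left(y\right)=y$ and $f\left[X\right]=X$, so the axiom yields $y\in\mbox{cl}_{H}\left(X\right)$ if and only if $y\in\mbox{cl}_{H'}\left(X\right)$. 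As $y\in H$ was arbitrary and $\mbox{cl}_{H}\left(X\right)\subseteq H$, this gives the equality $\mbox{cl}_{H}\left(X\right)=\mbox{cl}_{H'}\left(X\right)\cap H$; in particular $\mbox{cl}_{H}\left(X\right)\subseteq\mbox{cl}_{H'}\left(X\right)$, which is one of the two inclusions.

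For the reverse inclusion it then remains only to check that $\mbox{cl}_{H'}\left(X\right)\subseteq H$. Here I would unpack the definition of $H\preccurlyeq H'$, namely that $H$ is \emph{closed} in $H'$, so that $\mbox{cl}_{H'}\left(H\right)=H$. Since $X\subseteq H$ and $\mbox{cl}_{H'}$ is a pregeometry operator, hence monotone, we get $\mbox{cl}_{H'}\left(X\right)\subseteq\mbox{cl}_{H'}\left(H\right)=H$. Combining this with the previous step, $\mbox{cl}_{H'}\left(X\right)=\mbox{cl}_{H'}\left(X\right)\cap H=\mbox{cl}_{H}\left(X\right)$, which completes the proof.

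This argument is essentially a careful bookkeeping of the definitions, so I do not expect a serious obstacle; the only points requiring attention are conceptual rather than computational. One must recognize that closure coherence is precisely the axiom linking the two \emph{distinct} closure operators $\mbox{cl}_{H}$ and $\mbox{cl}_{H'}$ along the inclusion, that the inclusion of a substructure indeed qualifies as a partial embedding even when $X$ is infinite (closure coherence carries no finiteness restriction on $X$, so no appeal to the finite character of the pregeometry is needed), and that $H\preccurlyeq H'$ exactly encodes the $\mbox{cl}_{H'}$-closedness of $H$ that powers the monotonicity step.
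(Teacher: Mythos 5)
Your proof is correct, but note that the paper itself gives no proof of this proposition: it is imported verbatim as Proposition 4 of \cite{LH}, so there is no internal argument to compare against. Your derivation is the natural (and essentially the standard) one from the axioms as stated: closure coherence applied to the identity partial embedding $\iota\upharpoonright(X\cup\{y\})$ yields $\mathrm{cl}_{H}(X)=\mathrm{cl}_{H'}(X)\cap H$, and then $H\preccurlyeq H'$ (i.e.\ $\mathrm{cl}_{H'}(H)=H$) together with monotonicity of the pregeometry operator gives $\mathrm{cl}_{H'}(X)\subseteq H$, closing the loop. Your side remark is also accurate: since the closure coherence axiom as stated here carries no finiteness restriction on $X$, no appeal to finite character is needed (and even if it did, finite character would reduce to that case).
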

\begin{lem}
Fix $M\in\mathcal{Q}$ where $\mathcal{Q}$ is a quasiminimal pregeometry
class. Then $\mbox{cl}_{M}\left(A\right)=\mbox{cl}^{M}\left(A\right)$
(the intersection of all closed sets containing $A$) for all $A\subseteq M$.
\end{lem}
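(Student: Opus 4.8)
The plan is to show that the abstract closure operator $\mathrm{cl}^{M}$ (defined in the AEC setting as the intersection of all strong submodels containing $A$) coincides with the pregeometry closure $\mathrm{cl}_{M}$ supplied by the quasiminimal structure. Since $\mathrm{cl}^{M}(A)=\bigcap\{N : A\subseteq N\preccurlyeq M\}$ and, in this setting, $N\preccurlyeq M$ means precisely that $N$ is closed in $M$ (i.e.\ $\mathrm{cl}_{M}(N)=N$), the two inclusions should follow from general properties of pregeometries together with Proposition~\ref{prop:clo}. I would prove the two inclusions separately.

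For the inclusion $\mathrm{cl}_{M}(A)\subseteq\mathrm{cl}^{M}(A)$, I would argue that $\mathrm{cl}_{M}(A)$ is itself one of the sets being intersected on the right. By the \textbf{Relativization} axiom, $\langle\mathrm{cl}_{M}(A),\mathrm{cl}_{M}\upharpoonright\mathrm{cl}_{M}(A)\rangle\in\mathcal{Q}$, so $\mathrm{cl}_{M}(A)$ is a substructure of $M$ lying in the class; and since $\mathrm{cl}_{M}$ is a pregeometry operator, $\mathrm{cl}_{M}(\mathrm{cl}_{M}(A))=\mathrm{cl}_{M}(A)$, i.e.\ $\mathrm{cl}_{M}(A)$ is closed in $M$, hence $\mathrm{cl}_{M}(A)\preccurlyeq M$. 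As it contains $A$, it is one of the $N$ appearing in the intersection defining $\mathrm{cl}^{M}(A)$, so the intersection is contained in it — wait, that gives the reverse; more carefully, being a member of the intersected family means $\mathrm{cl}^{M}(A)\subseteq\mathrm{cl}_{M}(A)$. So this direction actually yields the \emph{other} inclusion, which I address next.

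For $\mathrm{cl}^{M}(A)\subseteq\mathrm{cl}_{M}(A)$: as just noted, $\mathrm{cl}_{M}(A)$ is a closed substructure in $\mathcal{Q}$ containing $A$, hence a member of the family $\{N : A\subseteq N\preccurlyeq M\}$, so the intersection $\mathrm{cl}^{M}(A)$ is contained in it. For the converse inclusion $\mathrm{cl}_{M}(A)\subseteq\mathrm{cl}^{M}(A)$, I would take an arbitrary $N$ with $A\subseteq N\preccurlyeq M$ and show $\mathrm{cl}_{M}(A)\subseteq N$. Since $N$ is closed in $M$, Proposition~\ref{prop:clo} (applied with $N\preccurlyeq M$ and $A\subseteq N$) gives $\mathrm{cl}_{N}(A)=\mathrm{cl}_{M}(A)$; and $\mathrm{cl}_{N}(A)\subseteq N$ since $\mathrm{cl}_{N}$ is a closure operator on $N$. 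Thus $\mathrm{cl}_{M}(A)\subseteq N$ for every such $N$, whence $\mathrm{cl}_{M}(A)\subseteq\bigcap N=\mathrm{cl}^{M}(A)$. Combining the two inclusions gives the equality.

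\textbf{The main obstacle} I anticipate is bookkeeping the precise meaning of $\preccurlyeq$ in the quasiminimal setting versus the AEC setting: the definition here declares $G\preccurlyeq H$ iff $G$ is \emph{closed} in $H$, so I must verify that ``closed'' in the pregeometry sense ($\mathrm{cl}_{M}(N)=N$) is exactly what makes $N$ eligible for the intersection defining $\mathrm{cl}^{M}$, and that such $N$ genuinely lies in $\mathcal{Q}$ (guaranteed by \textbf{Relativization}). The only subtle point is the invocation of Proposition~\ref{prop:clo}, which requires $N\in\mathcal{Q}$ and $N\preccurlyeq M$; both hold precisely because $N$ is a closed substructure, so the hypotheses are met. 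Everything else is routine manipulation of closure-operator axioms.
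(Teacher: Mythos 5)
Your proposal is correct and takes essentially the same route as the paper: both inclusions rest on the same two ingredients, namely the \textbf{Relativization} axiom (showing $\mathrm{cl}_{M}(A)$ is itself one of the closed sets in the intersection, so $\mathrm{cl}^{M}(A)\subseteq\mathrm{cl}_{M}(A)$) and Proposition~\ref{prop:clo} (for the reverse inclusion). The only difference is stylistic: you prove $\mathrm{cl}_{M}(A)\subseteq\mathrm{cl}^{M}(A)$ directly, by noting $\mathrm{cl}_{M}(A)=\mathrm{cl}_{N}(A)\subseteq N$ for every closed $N\supseteq A$, whereas the paper runs the identical argument as a proof by contradiction with a single witness $H$ avoiding a point $a$.
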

\begin{proof}
Let $A\subseteq M$ then $\mbox{cl}_{M}\left(A\right)\in\mathcal{Q}$
(by point 4 of the definition of $\mathcal{Q}$). Therefore $\mbox{cl}^{M}\left(A\right)\subseteq\mbox{cl}_{M}\left(A\right)$.
The other direction is also simple: suppose $a\in\mbox{cl}_{M}\left(A\right)\backslash\mbox{cl}^{M}\left(A\right)$,
then there is $H\in\mathcal{Q}$ such that $A\subseteq\mbox{cl}^{M}\left(A\right)\subseteq H$,
$H\preccurlyeq M$ and $a\notin H$. Note that $\mbox{cl}_{H}\left(A\right)\subseteq H$
does not contain $a$; but on the other hand from Proposition \ref{prop:clo}
it follows that $\mbox{cl}_{H}\left(A\right)=\mbox{cl}_{M}\left(A\right)$
and $a\in\mbox{cl}_{M}\left(A\right)=\mbox{cl}_{H}\left(A\right)$
which is a contradiction.
\end{proof}
Recall that in \cite{LH}, Haykazyan derives uncountable categoricity
from the quasiminimal pregeometry axioms. It is worth mentioning that
in November 2016, during corrections to an earlier version of this
paper, Sebastien Vasey has posted an article in ArXiv \cite{1611.07380}
where he proves that in AECs admitting intersections (i.e. with a
notion of closure defined like our $cl^{M}$), the exchange axiom
follows from the other axioms of a quasiminimal AEC.
\begin{thm}
\label{thm:2} Suppose $\mathcal{Q}$ is a quasiminimal pregeometry
class. Let $M\in\mathcal{Q}$ be the model of size $\aleph_{1}$ and
let $\mathcal{C}:=\left\{ \text{cl}_{M}\left(A\right):A\subseteq M,\left|A\right|<\left|M\right|\right\} $.
Then the class $\mathcal{C}$ is a strong amalgamation class and hence,
$\Aut\left(M\right)$ has SIP.
\end{thm}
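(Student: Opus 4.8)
The plan is to verify the defining conditions of a strong amalgamation class for $\mathcal{C}$ as introduced in Definition \ref{Strong-Amalgamation-Base}; concretely, given closed sets $N_0 \preccurlyeq N_1, N_2$ of size $<\aleph_1$ (so countable) and sequences $\gamma^1, \gamma^2$ of automorphisms of $N_1, N_2$ respectively, both extending a common sequence $\gamma^0$ of automorphisms of $N_0$, I must produce an amalgam $N_3 \in \mathcal{C}$ together with maps $\alpha_1, \alpha_2 \in \Aut_{N_0}(M)$ and a single $\gamma^3 \in \Aut(N_3)$ simultaneously extending the transported sequences. Since $\left|M\right| = \aleph_1$, every member of $\mathcal{C}$ other than $M$ itself is the closure of a countable set, hence countable by the \textbf{Countable closure} axiom; this puts us exactly in the situation governed by the axiom \textbf{Homogeneity over countable models} (axiom 6), which is the engine of the whole argument.

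First I would reduce to the case of a single pair of automorphisms: for a sequence indexed by some $I$ with $\left|I\right| < \aleph_1$ (so $I$ countable), the amalgamation is built coordinate by coordinate, and the cardinal arithmetic ($\aleph_1^{\aleph_0}$ issues aside, here $\kappa = \aleph_1$ and $\kappa^{<\kappa}=\kappa$ is assumed) together with Corollary \ref{cor:ext} lets me close up under all the coordinates at once. So the crux is: given $g^1 \in \Aut(N_1)$ and $g^2 \in \Aut(N_2)$ agreeing on $N_0$ via $g^0 \in \Aut(N_0)$, I want to amalgamate. The strategy is a back-and-forth construction of length $\omega$. Using axiom 6(a) (any two singletons outside countable closed sets realize the same generic type over an isomorphism) and axiom 6(b) (types in the closure of a finite set over the base are determined), I can build an isomorphism $h$ from $N_2$ onto a closed copy $N_2'$ inside $M$ fixing $N_0$ pointwise, such that the amalgam $N_3 := \mathrm{cl}_M(N_1 \cup N_2')$ is closed and the transported automorphism $h \circ g^2 \circ h^{-1}$ on $N_2'$ agrees with $g^1$ on the overlap $N_0$. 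The map $h$ extends to $\alpha_2 \in \Aut_{N_0}(M)$ by homogeneity of $M$, and I take $\alpha_1 = \mathrm{id}$.

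The genuinely delicate step is producing the \emph{single} automorphism $\gamma^3 \in \Aut(N_3)$ extending both $g^1$ on $N_1$ and the transported $g^2$ on $N_2'$. Here I would again run a back-and-forth, this time over the base structure $N_1 \cup N_2'$ (or rather its closure $N_3$), enumerating $N_3$ as $\{c_n : n < \omega\}$ and extending the partial map $g^1 \cup (h \circ g^2 \circ h^{-1})$ one element at a time. At each stage the domain is the closure of $N_1 \cup N_2'$ together with finitely many extra points, and I invoke axiom 6(b) to find an image of the next point keeping the map a partial embedding, alternating with the inverse direction to ensure surjectivity onto $N_3$. The compatibility of $g^1$ and the transported $g^2$ on $N_0$ is what guarantees the union of these two partial isomorphisms is itself a partial embedding at the start, so the process does not stall.

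The main obstacle I anticipate is \emph{well-definedness} of the joint map on the overlap: $N_1 \cap N_2'$ need not equal $N_0$, so before extending I must be sure $g^1$ and $h \circ g^2 \circ h^{-1}$ literally coincide on $N_1 \cap N_2'$, not merely on $N_0$. This is precisely why the choice of $h$ (hence of $\alpha_2$) in the first back-and-forth must be made carefully — using the generic-type uniqueness of axiom 6(a) one arranges $N_1 \cap N_2' = N_0$ (a free/disjoint amalgam over $N_0$), so that the two automorphisms agree exactly on the overlap and the joint map is unambiguous. Once this disjointness is secured, the exchange property packaged in the pregeometry $\mathrm{cl}_M$ (and the identification $\mathrm{cl}_M = \mathrm{cl}^M$ established in the lemma above) ensures the closure $N_3$ is built predictably from independent pieces, and the second back-and-forth goes through. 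With the single-automorphism case in hand, the full sequence version follows by the coordinatewise closing-up described above, establishing that $\mathcal{C}$ is a strong amalgamation class; the SIP conclusion is then immediate from Theorem \ref{thm:main}.
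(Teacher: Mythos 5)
Your proposal has the same skeleton as the paper's proof: build inside $M$ a copy $N_2'$ of $N_2$ over $N_0$ whose interaction with $N_1$ is controlled, transport $g^2$ to $N_2'$ via the copying map $h$, take the union with $g^1$, extend that union to an automorphism of $N_3=\mathrm{cl}_M(N_1\cup N_2')$ and then of $M$, and finish by Theorem \ref{thm:main}. The difference is that the paper delegates the two hard steps to \cite{LH} --- Theorem 16 there produces the copy $N_2'\cong_{N_0}N_2$ by sending a basis of $N_2$ over $N_0$ to points chosen \emph{independent over} $N_1$, and Proposition 14 there extends the union map to an automorphism of the closure --- whereas you try to reprove both by back-and-forth from axiom 6, and your replacements break down exactly where those results do the work. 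Concretely: you arrange only $N_1\cap N_2'=N_0$, call this a ``free/disjoint'' amalgam as if the two words were interchangeable, and justify that $g^1\cup(h\circ g^2\circ h^{-1})$ is a partial embedding by the agreement of the two maps on $N_0$. Agreement on the overlap makes the union a well-defined injection, nothing more; and disjointness does not imply independence in a non-modular pregeometry. For instance, $\mathrm{ACF}_0$ with $\mathrm{acl}$ satisfies the quasiminimal pregeometry axioms: take $N_0=\mathrm{acl}(\emptyset)$, $N_1=\mathrm{acl}(x,y)$ a generic point of the plane and $N_2'=\mathrm{acl}(a,b)$ a generic line through it (so $y=ax+b$, with $a,b,x$ algebraically independent). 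Then $N_1\cap N_2'=N_0$, yet the union of $\mathrm{id}_{N_1}$ with the automorphism of $N_2'$ over $N_0$ swapping $a$ and $b$ is not a partial embedding, since the image tuple would have to satisfy $y=bx+a$ as well, which is impossible for generic $a,b,x$. So your first back-and-forth must secure genuine independence (choose images outside $\mathrm{cl}_M(C'\cup N_1)$, not merely outside $C'$), and even granting that, the statement that the union of two compatible isomorphisms of \emph{independent} closed sets is a partial embedding is a substantive lemma (uniqueness of independent extensions, iterated through closures); it is not something that holds because ``the process does not stall.''

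The second gap is in the back-and-forth that is supposed to produce $\gamma^3\in\Aut(N_3)$. Axiom 6(b) applies to maps of the form (isomorphism between countable \emph{closed} sets) $\cup$ (finite map); your base $N_1\cup N_2'$ is not closed, so the axiom's hypothesis fails at every step. Worse, 6(b) only supplies \emph{some} image for a point of the closure, while you need the final automorphism to agree with the transported $g^2$ on \emph{all} of $N_2'$ --- including the points of $N_2'$ that are pregeometry-dependent over $N_0\cup(\text{basis})$, whose images are prescribed in advance --- and nothing in your construction forces the 6(b)-choices to coincide with those prescribed values. This is precisely what Proposition 14 of \cite{LH} provides; either invoke it, as the paper does, or prove it, but as written the key step fails. (A smaller point: the ``coordinate by coordinate'' reduction to a single automorphism is not available as stated, since Definition \ref{Strong-Amalgamation-Base} requires a single $N_3$ and a single pair $\alpha_1,\alpha_2$ for the whole sequence; this is repairable, as in the paper, because the copy $N_2'$ and the map $\alpha_2$ are chosen independently of the automorphisms and so serve all coordinates at once.)
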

\begin{proof}
Let $N_{0}\in\mathcal{C}$ and $\gamma^{0}\in\Aut\left(N_{0}\right)$.
We want to show $\left(N_{0},\gamma^{0}\right)$ is a strong amalgamation
basis. Suppose $N_{1},N_{2}\in\mathcal{K}^{<}\left(M\right)$ are
such that $N_{0}\preccurlyeq N_{1},N_{2}$; and let $\gamma^{1}$
and $\gamma^{2}$ be sequences of automorphisms of $N_{1}$ and $N_{2}$
extending $\gamma^{0}$; respectively. We claim one can find $N'_{2}\in\mathcal{C}$
such that $N'_{2}\cong_{N_{0}}N_{2}$ and $N'_{2}\cap N_{1}=N_{0}$.
Let $B$ be a basis for $N_{2}$ (i.e. $\mbox{cl}_{M}\left(B\right)=N_{2}$).
Choose $B'$ an independent set in $M$ such that there is a bijection
between $B'$ and $B$, and moreover $B'\cap N_{1}=B_{0}=B\cap N_{2}$
where $\mbox{cl}_{M}\left(B_{0}\right)=N_{0}$. Now let $N'_{2}=\mbox{cl}_{M}\left(B'\right)$.
Note that we can choose $B'$ in such a way that $N'_{2}\cap N_{1}=N_{0}$.
By Theorem 16 in \cite{LH} there is an isomorphism $\alpha$ between
$N_{2}$ and $N_{2}'$ and therefore $N'_{2}\cong_{N_{0}}N_{2}$.
Now $\gamma^{1}\cup\left(\alpha\circ\gamma^{2}\circ\alpha^{-1}\right)$
is an isomorphism of $N_{1}\cup\alpha\left[N_{2}\right]$ to itself.
Then by Proposition 14 in \cite{LH} extends $\gamma^{1}\cup\left(\alpha\circ\gamma^{2}\circ\alpha^{-1}\right)$
extends to an automorphism of $N_{3}:=\mbox{cl}_{M}\left(N_{1}\cup\alpha\left[N_{2}\right]\right)$
and then to an automorphism of $M$. Therefore it follows that $\gamma^{1}\cup\left(\alpha\circ\gamma^{2}\circ\alpha^{-1}\right)$
extends to an automorphism of $N_{3}$ where $N_{3}\in\mathcal{C}$
and $N_{1},\alpha\left[N_{2}\right]\preccurlyeq N_{3}$. Hence, $\left(N_{0},\gamma^{0}\right)$
is an amalgamation base. 
\end{proof}
\begin{cor}
Let $B$ be the Zilber field. Then $\Aut\left(B\right)$ has SIP for
the topology where basic open sets around the identity are given by
stabilizers of sets of size less than continuum.
\end{cor}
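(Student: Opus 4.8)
The plan is to exhibit the Zilber field $B$ as a model inside a quasiminimal pregeometry class and then invoke Theorems \ref{thm:2} and \ref{thm:main}; almost all of the real work has already been done, so the proof reduces to identifying the correct data and matching two topologies. First I would recall that Zilber's pseudo-exponential field, equipped with the exponential-algebraic closure operator $\mathrm{ecl}$ as its pregeometry, satisfies the axioms of a quasiminimal pregeometry class $\mathcal{Q}$ in the sense above; this is the content of \cite{BHHKK,LH} (building on \cite{MR2102856}), where the countable-closure and homogeneity-over-countable-models axioms are verified. The class is categorical in every uncountable cardinal, and $B$ is its unique model of size continuum (the one conjecturally isomorphic to $\left(\mathbb{C},+,\cdot,\exp\right)$), so I set $\kappa=\left|B\right|=2^{\aleph_{0}}$ and note $\mathrm{LS}\left(\mathcal{Q}\right)=\aleph_{0}<\kappa$.

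Next I would check that $B$ meets the hypotheses of Theorem \ref{thm:main} with this $\kappa$. The homogeneity of $B$ in our precise sense---that every isomorphism between closed substructures of size $<\kappa$ extends to an automorphism---follows from the isomorphism-extension and automorphism-extension results of \cite{LH} (Theorem 16 and Proposition 14 there), exactly as they are used inside the proof of Theorem \ref{thm:2}. The family $\mathcal{C}$ of $\mathrm{ecl}$-closed subsets of size $<\kappa$ is a strong amalgamation class: this is Theorem \ref{thm:2}, whose proof is written for size $\aleph_{1}$ but uses only the existence of bases, the ability to choose an independent copy of $N_{2}$ over $N_{0}$ meeting $N_{1}$ exactly in $N_{0}$, and the two extension lemmas---none of which is sensitive to the particular value of the uncountable cardinal---so it carries over to $\kappa=2^{\aleph_{0}}$ verbatim. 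The one genuinely external ingredient is the cardinal-arithmetic hypothesis $\kappa^{<\kappa}=\kappa$, and this is where I expect the only real friction: I would either assume the relevant instance of continuum arithmetic or restrict to it explicitly, noting that it holds for example under $\mathrm{CH}$, where $\kappa=\aleph_{1}=2^{\aleph_{0}}$ and $\aleph_{1}^{<\aleph_{1}}=\aleph_{1}^{\aleph_{0}}=2^{\aleph_{0}}=\aleph_{1}$.

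With these checks in place, Theorem \ref{thm:main} applies directly and gives that $\left(\Aut\left(B\right),\mathfrak{T}^{\mathrm{cl}}\right)$ has the small index property, where $\mathfrak{T}^{\mathrm{cl}}$ is generated by the stabilizers $\Aut_{X}\left(B\right)$ of closed sets $X\in\mathcal{C}$. The last step is to match this with the topology stated in the corollary, whose basic neighborhoods of the identity are the pointwise stabilizers $\Aut_{A}\left(B\right)$ of arbitrary sets $A$ of size $<2^{\aleph_{0}}$. Since every closed small set is in particular a small set, $\mathfrak{T}^{\mathrm{cl}}$ is coarser than or equal to the stated topology; conversely, for any small $A$ we have $A\subseteq\mathrm{cl}^{B}\left(A\right)\in\mathcal{C}$ with $\left|\mathrm{cl}^{B}\left(A\right)\right|<\kappa$, so $\Aut_{A}\left(B\right)$ is a subgroup containing the $\mathfrak{T}^{\mathrm{cl}}$-open subgroup $\Aut_{\mathrm{cl}^{B}\left(A\right)}\left(B\right)$ and is therefore itself $\mathfrak{T}^{\mathrm{cl}}$-open. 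Hence the two topologies coincide and the small index property for $\left(\Aut\left(B\right),\mathfrak{T}^{\mathrm{cl}}\right)$ is exactly the asserted statement. The main obstacle throughout is not any single hard step but bookkeeping: confirming the cardinal-arithmetic hypothesis at $\kappa=2^{\aleph_{0}}$, and verifying that the abstract closure agrees with $\mathrm{ecl}$ so that $\mathcal{C}$ is the correct family of closed sets---the latter being the closure lemma above showing $\mathrm{cl}_{M}\left(A\right)=\mathrm{cl}^{M}\left(A\right)$.
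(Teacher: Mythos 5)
Your proposal is correct and follows essentially the same route as the paper: there the corollary is drawn immediately from Theorem \ref{thm:2} (the Zilber field being the uncountable model of a quasiminimal pregeometry class, with $\mathrm{ecl}$ as the closure), which is exactly what you do. If anything, you are more careful than the paper itself, which states Theorem \ref{thm:2} at $\aleph_{1}$ and silently identifies that model with the continuum-sized Zilber field: your explicit flagging of the hypothesis $\kappa^{<\kappa}=\kappa$ at $\kappa=2^{\aleph_{0}}$ (equivalently, of CH if one works at $\aleph_{1}$), and of the fact that $\mathfrak{T}^{\mathrm{cl}}$ coincides with the topology generated by pointwise stabilizers of arbitrary sets of size less than continuum, makes explicit two steps the paper leaves to the reader.
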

It is an interesting question to check whether by investigating the
automorphism group of the Zilber field (which has the Schanuel property)
we would be able to detect any difference between the Zilber field
and $\left(\mathbb{C},+,\cdot,\mbox{exp}\right)$. From \cite{MR1204064}
it follows that the complex numbers (without $exp$) have the SIP.
The construction of the Zilber field, using the quasiminimal excellence
setting, allowed us to prove SIP for its automorphism group. Then
the question of whether or not we are able to detect similar properties
about the automorphism group of $\left(\mathbb{C},+,\cdot,\mbox{exp}\right)$
seems plausible.

\subsection{More $L_{\omega_{1},\omega}$-sentences and the SIP (other directions)}

Now that we have established the SIP for classes arising from quasiminimal
pregeometry classes (really, for their model of size $\aleph_{1}$)
and we have shown that being a quasiminimal pregeometry class is enough
to obtain that the class of small elementary submodels satisfies strong
amalgamation, a natural question is for which other classes axiomatizable
by $L_{\omega_{1},\omega}$-sentences can we guarantee the SIP (at
uncountable homogeneous models!). A natural class would be \emph{excellent
}$L_{\omega_{1},\omega}$-sentences that are not necessarily quasiminimal.
The study of the exact conditions that would guarantee that homogeneous
models of those classes have strong amalgamation are, however, the
subject of possible future work.

\subsection{Covers}

The fact that all quasiminimal pregeometry classes satisfy our framework
for SIP (Theorem \ref{thm:2}) provides a further source of examples.
In section \ref{subsec:The-SIP-and}, we analyzed the case of the
``Zilber field'' as a quasiminimal pregeometry class. This is indeed
one instance of a much more general phenomenon: some covers and some
modular invariants (like the classical $j$-function) have been shown
to be quasiminimal pregeometry classes and therefore their homogeneous
models $M$ have automorphism groups $Aut(M)$ that are SIP, by our
results. 

Variants of this example have been studied extensively by Zilber and
others (Baldwin, Bays, Caycedo, Kirby, Sustretov, etc.). The general
setup arises from a short exact sequence

\[
0\to K\overset{i}{\to}G\overset{f}{\to}H\to\text{1}
\]
where $G,H$ are groups (typically $G$ is an additive abelian group
and H is a multiplicative group) and the map $f:G\to H$ is a homomorphism
with kernel $K$. In particular, for this situation, Bays and Zilber
provide in \cite{BaysZilber} conditions for the quasiminimality of
the structure. Any structure axiomatized in $L_{\omega_{1},\omega}$
that satisfies the Bays-Zilber conditions for quasiminimality will
have the SIP.

The ``Zilber field'' of our previous section has the added complexity
that we are dealing with the field structure and not just with a homomorphism
from the additive group structure into the multiplicative group part.
A simplified structure often studied (see \cite{MR2159728}) in connection
with complex exponentiation is sometimes presented as a cover

\[
0\to2\pi i\mathcal{\mathbb{Z}}\overset{i}{\to}\left(\mathbb{C},+\right)\overset{exp}{\to}\left(\mathbb{C}^{\times},\cdot\right)\to1.
\]

\subsection{j-mappings}

Harris \cite{Geometric-Pillay} defines in an $L_{\omega_{1},\omega}$-axiomatization
of the classical ``modular invariant'', also called the ``$j$-mapping'',
an analytic function from the upper half plane $\mathbb{H}$ into
the complex numbers, 
\[
j:\mathbb{H}\to\mathbb{C}.
\]
 The $j$-mapping is a crucial component of analytic number theory.
It provides surprising connections between properties of extensions
of number fields and analytic properties of the mapping - the solution
to Hilbert's Twelfth problem for the characteristic zero and complex
case hinges on this.

Harris's model-theoretic analysis of $j$, developed in \cite{Harris},
produces a quasiminimal pregeometry and therefore is a new example
of a class with the SIP. We describe briefly the backbone of Harris's
analysis, and the connection to quasiminimality and the SIP.

Let $L$ be a language for two-sorted structures of the form 
\[
{\mathfrak{A}}=\langle\langle H;\{g_{i}\}_{i\in{\mathbb{N}}}\rangle,\langle F,+,\cdot,0,1\rangle,j:H\to F\rangle
\]
where $\langle F,+,\cdot,0,1\rangle$ is an algebraically closed field
of characteristic $0$, $\langle H;\{g_{i}\}_{i\in{\mathbb{N}}}\rangle$
is a set together with countably many unary function symbols (representing
the action of a countable group on the upper half plane), and $j:H\to F$.
Really, $j$ is a \textbf{cover} from the \emph{action} structure
into the field $\mathcal{\mathfrak{\mathbb{C}}}$. 

Let then 
\[
Th_{\omega_{1},\omega}(j):=Th(\mathbb{C}_{j})\cup\forall x\forall y(j(x)=j(y)\to\bigvee_{i<\omega}x=g_{i}(y))
\]
 where $\mathbb{C}_{j}$ is the ``standard $j$-model'' $(\mathbb{H},\langle\mathbb{\mathbb{C}},+,\cdot,0,1\rangle,j:\mathbb{H}\to\mathbb{C})$
and $Th(\mathbb{C}_{j})$ is its first order theory.

This captures all the first order theory of $j$ (not the analyticity!)
plus the fact that fibers are ``standard'' (``fibers are orbits'').

Harris analyzes this structure in order to establish (modulo serious
algebraic geometric results\footnote{In his proof of categoricity, Harris uses an instance of the so called
``adelic Mumford-Tate conjecture'' (proved by Serre) for products
of elliptic curves. The strategy to build an isomorphism between two
models $M$ and $M'$ partially consists in proving that the closure
given on the ``$H$ sort'' by $cl_{j}(A)=j^{-1}(acl(j(A)))$ provides
a quasiminimal pregeometry structure. Proving this is not trivial
at all: one needs to control the quantifier-free types of ``elliptic
curves'' as represented by the parameter in $H$ in terms of their
interactions with other such parameters.}) the categoricity of $Th_{\omega_{1},\omega}(j)+{\rm trdeg}(F)\geq\aleph_{0}$
in all infinite cardinalities. A crucial step in this work consists
in proving that the notion of closure given as
\[
cl_{j}(A):=j^{-1}(acl(j(A)))
\]

indeed defines a quasiminimal pregeometry structure.

Now, our main theorem implies (modulo Harris's quite long analysis)
that the ``automorphism group of $j$'', that is, the set of map-pairs
$\varphi_{H},\varphi_{F}$ such that

\[
\xymatrix{\mathcal{H}_{1}\ar[d]_{j_{1}}\ar[r]^{\varphi_{\mathcal{H}}} & \mathcal{H}_{2}\ar[d]^{j_{2}}\\
\mathcal{F}_{1}\ar[r]_{\varphi_{\mathcal{F}}} & \mathcal{F}_{2}
}
\]
commutes, has the SIP (for the topology where basic open sets around
the identity are stabilizers of sets of size less than continuum).

\selectlanguage{american}%
\bibliographystyle{acm}
\selectlanguage{english}%

\end{document}